\documentclass[11pt,a4paper]{amsart}
\usepackage{amsmath,amssymb,amsthm,color,multicol,setspace}
\usepackage[utf8,utf8x]{inputenc}

\usepackage{fullpage}

\usepackage[pdftitle = {Subpolygons\ in\ Conway-Coxeter\ frieze patterns}]{hyperref}
\usepackage{longtable}
\usepackage{xy,amscd}
\usepackage{comment}
\usepackage{epsfig}
\usepackage{rotating}
\usepackage{xspace}
\usepackage{enumitem}
\usepackage{dashbox}
\usepackage{float}
\xyoption{all}
\usepackage{tikz}
\usetikzlibrary{arrows,decorations.pathmorphing,decorations.pathreplacing,positioning,shapes.geometric,shapes.misc,decorations.markings,decorations.fractals,calc,patterns}

\newtheorem{Lemma}{Lemma}[section]
\newtheorem{Theorem}[Lemma]{Theorem}
\newtheorem{Proposition}[Lemma]{Proposition}

\newtheorem*{Theorem*}{Theorem}
\newtheorem*{Corollary*}{Corollary}

\theoremstyle{definition}
\newtheorem{Definition}[Lemma]{Definition}

\newtheorem{Remark}[Lemma]{Remark}

\newtheorem{Example}[Lemma]{Example}

\numberwithin{equation}{section}

\setcounter{MaxMatrixCols}{20}
\setlength{\arraycolsep}{0.06cm}

\newcommand{\CC }{\mathbb{C}}

\DeclareMathOperator{\SL}{SL}

\title[Subpolygons in Conway-Coxeter frieze patterns]
{Subpolygons in Conway-Coxeter frieze patterns}

\author{Michael~Cuntz}
\address{Michael Cuntz, Leibniz Universit\"at Hannover,
Institut f\"ur Algebra, Zahlentheorie und Diskre\-te Mathematik,
Fakult\"at f\"ur Mathematik und Physik,
Welfengarten 1,
D-30167 Hannover, Germany}
\email{cuntz@math.uni-hannover.de}
\urladdr{https://www.iazd.uni-hannover.de/cuntz.html}

\author{Thorsten~Holm}
\address{Thorsten Holm, Leibniz Universit\"at Hannover,
Institut f\"ur Algebra, Zahlentheorie und Diskre\-te Mathematik,
Fakult\"at f\"ur Mathematik und Physik,
Welfengarten 1,
D-30167 Hannover, Germany}
\email{holm@math.uni-hannover.de}
\urladdr{http://www2.iazd.uni-hannover.de/\~{ }tholm}

\keywords{Frieze pattern, tame frieze pattern, quiddity cycle, cluster algebra, polygon, triangulation}

\subjclass[2010]{05E15, 05E99, 13F60, 51M20}

\begin{document}

\begin{abstract}
Friezes with coefficients are maps assigning numbers to the edges and diagonals of a regular polygon such that all Ptolemy relations for crossing diagonals are satisfied. Among these, the classic Conway-Coxeter friezes are the ones where all values are natural numbers and all edges have value 1. Every subpolygon of a Conway-Coxeter frieze yields a frieze with coefficients over the natural numbers. In this paper we give a complete arithmetic criterion for which friezes with coefficients appear as subpolygons of Conway-Coxeter friezes. This generalizes a result of our earlier paper with Peter J{\o}rgensen from triangles to subpolygons of arbitrary size.
\end{abstract}

\maketitle

\section{Introduction}

Frieze patterns are infinite configurations of numbers introduced by
Coxeter \cite{Cox71} in the 1970s, the shape of which is reminiscent of 
friezes which appeared in architecture and decorative art for centuries.
The entries in a frieze pattern have to satisfy a specific rule for each
neighbouring $2\times 2$-determinant. This frieze pattern rule is for example implicitly contained in the structure of smooth toric varieties and has been essential in the study of continued fractions more than a century earlier. It also reappeared some 30 years after Coxeter's definition as the exchange condition in Fomin and Zelevinsky's 
cluster algebras, mathematical structures
which became highly influential in many areas of modern 
mathematics. This connection to cluster algebras initiated an intensive 
renewed interest in frieze patterns in recent years, see \cite{MG15}. Whereas classic frieze
patterns are bounded by rows of 1's, to capture cluster algebras with 
coefficients more general boundary rows and a modified rule for 
$2\times 2$-determinants are needed. The resulting frieze patterns with coefficients
have been suggested by Propp \cite{Propp} and recently their fundamental
properties have been studied in \cite{CHJ}. Among other things, it is proven in
\cite{CHJ} that a frieze pattern with coefficents can be viewed as a map on 
edges and diagonals of a regular polygon (with values in a suitable number
system) satisfying the Ptolemy relations for any pair of crossing diagonals;
we then speak of a frieze with coefficients to distinguish these viewpoints.

For classic frieze patterns this viewpoint was well-known, not least for 
classic frieze patterns over positive integers, where a beautiful result of
Conway and Coxeter \cite{CC73} shows that such frieze patterns are in 
bijection with triangulations of regular polygons.
Any subpolygon of a Conway-Coxeter frieze yields a 
frieze with coefficients over the positive integers. The natural question
arises which friezes with coefficients actually appear as subpolygons of 
Conway-Coxeter friezes. A solution would give new insight into the subtle
arithmetic relations of entries in Conway-Coxeter friezes, and hence 
triangulations of polygons. 

It is a special property of a frieze with coefficents to appear in a Conway-Coxeter
frieze. For instance, we observed in \cite{CHJ} that for every triangle in
a Conway-Coxeter frieze the greatest common divisors of any two of the three 
numbers must be equal. This already rules out many friezes with coefficients. 

Still, there are many friezes with coefficients where the condition on the greatest
common divisors holds for all triangles and then it is a priori difficult to
determine whether such a frieze with coefficients appears in a Conway-Coxeter
frieze, or not. As one main result of \cite{CHJ} we have shown that for triangles 
this happens if and only if the three numbers are all odd or do not have the 
same 2-valuation. 

The aim of this paper is to give a complete solution to this problem for polygons of
arbitrary size, that is, we present a  
characterization of those friezes with coefficients which appear as subpolygons 
in Conway-Coxeter friezes. 

\begin{Theorem*}
Let $\mathcal{C}$ be a frieze with coefficients 
on an $n$-gon over positive integers. Then $\mathcal{C}$ appears as a subpolygon
of some Conway-Coxeter frieze if and only if the following conditions are
satisfied:
\begin{enumerate}
\item \label{cond0:gcd}
For any triangle $(a,b,c)$ in $\mathcal{C}$ we have
$\gcd(a,b)=\gcd(b,c)=\gcd(a,c)$.
\item \label{cond0:p+1}
Let $p<n$ be a prime number. Then for each $(p+1)$-subpolygon $\mathcal{D}$
of $\mathcal{C}$ the labels of edges and diagonals in $\mathcal{D}$ are either
all not divisible by $p$ or they do not all have the same $p$-valuation.
\end{enumerate}
\end{Theorem*}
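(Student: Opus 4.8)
The plan is to prove the two implications by rather different means: necessity by an arithmetic analysis of the matrices attached to a subpolygon of a Conway-Coxeter frieze, and sufficiency by induction on the number of vertices $n$, the base case $n=3$ being precisely the triangle result of \cite{CHJ}. Note that condition~(\ref{cond0:p+1}) for $p=2$ is exactly the $2$-valuation criterion for triangles from \cite{CHJ}, so the real content of the theorem is the interplay between that criterion, its odd-prime analogues, and the gcd-condition~(\ref{cond0:gcd}).

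\emph{Necessity.} Condition~(\ref{cond0:gcd}) is the gcd-property of triangles in Conway-Coxeter friezes proved in \cite{CHJ}. For~(\ref{cond0:p+1}) I would use the matrix description of subpolygons: a frieze with coefficients $\mathcal{C}$ on an $n$-gon is encoded by matrices $M_0,\dots,M_{n-1}\in\SL_2(\ZZ)$ with $\prod_i M_i=\pm\id$ whose continuants recover the entries of $\mathcal{C}$, and $\mathcal{C}$ is a subpolygon of a Conway-Coxeter frieze precisely when each $M_i$ can be written as an ordered product of elementary matrices $\bigl(\begin{smallmatrix} c & -1 \\ 1 & 0\end{smallmatrix}\bigr)$ with $c\ge 1$ a positive integer. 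Assume for contradiction that some $(p+1)$-subpolygon $\mathcal{D}$, on vertices $w_0,\dots,w_p$, has all of its $\binom{p+1}{2}$ entries divisible by $p$ and of one common $p$-valuation $v\ge 1$. Grouping the $M_i$ (together with the intervening elementary factors) between consecutive $w_j$'s produces $p+1$ matrices $N_0,\dots,N_p$, again products of positive elementaries, with $\prod_{j=0}^{p} N_j=\pm\id$, whose continuants are the entries of $\mathcal{D}$. Dividing the relevant entries by $p^v$ and reducing modulo $p$, the valuation hypothesis forces the $N_j$ into a very restricted shape modulo $p$ (they share a common eigenline); combining this with positivity and with $\prod_j N_j=\pm\id$ I would reach a contradiction, the point being that there are exactly $p+1$ factors $N_j$. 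It suffices to test $(p+1)$-subpolygons, since a violation on a larger subpolygon restricts to one on $p+1$ suitably chosen vertices. Making the modular normal form of the $N_j$ precise, and turning the count of factors into a genuine obstruction, is the delicate part here.

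\emph{Sufficiency.} Let $n>3$ and let $\mathcal{C}$ satisfy (\ref{cond0:gcd}) and (\ref{cond0:p+1}). Choose a vertex $v$ of the $n$-gon and write $(a,b,c)$ for the triangle at $v$, where $a,b$ are the values of the two edges at $v$ and $c$ is the value of the diagonal $d$ cutting $v$ off. Deleting $v$ yields a frieze with coefficients $\mathcal{C}'$ on the $(n-1)$-gon whose new edge $d$ has value $c$. Every triangle of $\mathcal{C}'$ is a triangle of $\mathcal{C}$, and every $(q+1)$-subpolygon of $\mathcal{C}'$ with $q<n-1$ is a subpolygon of $\mathcal{C}$, so $\mathcal{C}'$ again satisfies (\ref{cond0:gcd}) and (\ref{cond0:p+1}); by induction $\mathcal{C}'$ is a subpolygon of a Conway-Coxeter frieze $\mathcal{F}'$, a triangulated $N'$-gon. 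In $\mathcal{F}'$ the edge $d$ is a chord which, on the side away from the remaining vertices of $\mathcal{C}'$, bounds a triangulated region $R$ whose quiddity continuant along $d$ equals $c$. The plan is now to insert $v$ as a new vertex inside $R$ and re-triangulate, obtaining a region $R^{+}$ in which the two new diagonals from $v$ to the endpoints of $d$ acquire the values $a$ and $b$ while $d$ keeps the value $c$ and all boundary edges stay equal to $1$. Replacing $R$ by $R^{+}$ turns $\mathcal{F}'$ into a triangulated $N$-gon $\mathcal{F}$, hence a Conway-Coxeter frieze, containing the $n$ vertices of $\mathcal{C}$. The induced frieze with coefficients on those $n$ vertices agrees with $\mathcal{C}$ on all edges and on the fan of diagonals from one endpoint of $d$ inside the $(n-1)$-subpolygon, which is a triangulating family of the $n$-gon; since a frieze with coefficients is determined by such data through iterated Ptolemy relations, it coincides with $\mathcal{C}$ everywhere. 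In particular the remaining diagonals at $v$ come out correctly of their own accord.

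Everything is thereby reduced to the local insertion problem, and this is the step I expect to be the real obstacle. It is a relative version of the triangle case of \cite{CHJ}: realise $(a,b,c)$ inside a triangulation, but with the chord $d$ already embedded in a prescribed ambient region $R$ that one may only modify by flips. The conditions (\ref{cond0:gcd}) and (\ref{cond0:p+1}) for $\mathcal{C}$, applied to the subpolygons through $v$, must be exactly what guarantees that a compatible $R^{+}$ exists; to exploit them I would strengthen the induction hypothesis so as to control the Conway-Coxeter realisation near $d$ — for instance arranging $R$ to be a fan, or more generally a normal form depending only on $c$ — and isolate that statement as a separate lemma proved by flips. Once $R$ is in normal form the insertion should decouple into the unconstrained triangle problem for $(a,b,c)$ together with the arithmetic input of (\ref{cond0:p+1}), and the induction closes.
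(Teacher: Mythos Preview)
Your sufficiency argument has a genuine gap at the ``local insertion'' step, and the obstruction is structural rather than merely technical. You propose to cut the Conway--Coxeter frieze $\mathcal{F}'$ along the chord $d$ of value $c$, keep the $\mathcal{C}'$-side, and replace the ``triangulated region $R$'' on the other side by some $R^{+}$ containing $v$. But a Conway--Coxeter frieze does not decompose along a diagonal of value $c>1$: the triangulation underlying $\mathcal{F}'$ will in general have arcs crossing $d$, so there is no well-defined triangulated region $R$ to replace. Concretely, for vertices $k,\ell$ on the $\mathcal{C}'$-side the entry $c_{k,\ell}$ is the continuant of the quiddity values $a_{k+1},\ldots,a_{\ell-1}$, and these quiddities count \emph{all} triangles at the interior $\mathcal{C}'$-vertices, including those reaching across $d$ into $R$; altering $R$ therefore alters the $\mathcal{C}'$-side entries, so the subpolygon of your new $\mathcal{F}$ on the old vertices need not equal $\mathcal{C}'$. (Already in the pentagon with the fan from vertex $1$ and $d=(2,5)$, $c_{2,5}=3$, every triangle meets vertex $1$, and nothing on the $\{2,3,4,5\}$-side can be kept fixed.) Your suggested fix of arranging $R$ to be a fan does not help either: that forces $d$ into the triangulation and hence $c=1$. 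So the induction, as set up, cannot close.

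The paper runs the induction in the opposite direction and thereby sidesteps the gluing issue altogether. Instead of deleting a vertex from $\mathcal{C}$ and later trying to re-insert it into an ambient Conway--Coxeter frieze, one \emph{adds} a vertex directly to $\mathcal{C}$: given a boundary edge of label $c_0>1$, one constructs a frieze with coefficients $\widetilde{\mathcal{C}}$ on an $(n+1)$-gon containing $\mathcal{C}$, in which that edge becomes a diagonal and the two new boundary edges at the added vertex carry labels $1$ and some $y_0$ with $0<y_0<c_0$. The new diagonal labels $y_j$ are manufactured prime by prime and assembled by the Chinese Remainder Theorem; condition~(\ref{cond0:p+1}) is precisely what guarantees, via a pigeonhole count of residues modulo each prime $p\mid c_0$, that an admissible residue class for $y_{i_p}$ survives. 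One then checks that $\widetilde{\mathcal{C}}$ again satisfies (\ref{cond0:gcd}) and (\ref{cond0:p+1}), so the process iterates until all boundary labels are $1$; in particular the triangle result of \cite{CHJ} is not used as a base case but recovered as a corollary. For necessity the paper also argues differently from your matrix sketch: a short Ptolemy computation shows that if some $(p+1)$-subpolygon has constant $p$-valuation $m$, then $p^m$ divides every label of the ambient frieze with coefficients, which forces $m=0$ once an edge of label $1$ is present.
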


Combining this result with Proposition \ref{prop:pm}, we obtain the following 
consequence (where $k\cdot \mathcal{E}$ denotes the frieze with coefficients
obtained by multiplying the label of each edge and diagonal 
of $\mathcal{E}$ by $k$).

\begin{Corollary*}
Let $\mathcal{C}$ be a frieze with coefficients on an $n$-gon over the positive integers.
Assume that we have $\gcd(a,b)=\gcd(b,c)=\gcd(a,c)$ for any triangle $(a,b,c)$ in $\mathcal{C}$.
Then there exists a Conway-Coxeter frieze $\mathcal{E}$ such that $\mathcal{C}$ is a subpolygon of $k\cdot \mathcal{E}$ for some positive integer $k$.
\end{Corollary*}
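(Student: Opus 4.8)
The plan is to deduce the Corollary from the main Theorem by dividing the labels of $\mathcal{C}$ by a suitable constant until the second condition of the Theorem (the $(p+1)$-subpolygon condition) becomes available, and then to scale back up. The substantial input is Proposition~\ref{prop:pm}, which I will use in the following form: if a frieze with coefficients over the positive integers satisfies the $\gcd$-condition (condition~(1) of the Theorem) but fails the $(p+1)$-subpolygon condition (condition~(2)) --- that is, some $(p+1)$-subpolygon, with $p<n$ prime, has all of its labels divisible by $p$ and all of the same $p$-valuation --- then $p$ divides \emph{every} label of the frieze, and dividing all labels by $p$ again yields a frieze with coefficients over the positive integers satisfying condition~(1).

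Granting this, let $d$ be the greatest common divisor of all labels of $\mathcal{C}$ and let $\mathcal{C}_0$ be the frieze with coefficients on the same $n$-gon whose label at each edge and diagonal is the corresponding label of $\mathcal{C}$ divided by $d$. Two routine checks are needed. First, $\mathcal{C}_0$ is again a frieze with coefficients over the positive integers: each Ptolemy relation is homogeneous of degree two in the labels and hence is preserved under division of all labels by the common factor $d$, while positivity and integrality are clear. Second, $\mathcal{C}_0$ still satisfies condition~(1): for a triangle $(a,b,c)$ of $\mathcal{C}$ all three of $a,b,c$ are divisible by $d$ and $\gcd(a/d,b/d)=\gcd(a,b)/d$, so the equalities $\gcd(a,b)=\gcd(b,c)=\gcd(a,c)$ descend to the triangle $(a/d,b/d,c/d)$ of $\mathcal{C}_0$. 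By construction the greatest common divisor of all labels of $\mathcal{C}_0$ equals $1$, so no prime divides all its labels; hence, by the form of Proposition~\ref{prop:pm} quoted above, $\mathcal{C}_0$ cannot fail condition~(2). Thus $\mathcal{C}_0$ satisfies both hypotheses of the Theorem and is therefore a subpolygon of some Conway-Coxeter frieze $\mathcal{E}$. Since passing to a subpolygon commutes with scaling all labels by a fixed constant, $\mathcal{C}=d\cdot\mathcal{C}_0$ is a subpolygon of $d\cdot\mathcal{E}$, and $k=d$ proves the Corollary. (One can equally well avoid forming the $\gcd$ in one go and instead peel off one violating prime at a time, inducting on the sum of all labels; the bookkeeping is identical.)

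The only genuine difficulty is Proposition~\ref{prop:pm} itself: one must show that a single offending $(p+1)$-subpolygon already forces $p$ to divide every label. This is where the Ptolemy relations have to be exploited in an essential way rather than merely the local $\gcd$-condition --- already in a single quadrilateral $wxyz$ the relation $(wy)(xz)=(wx)(yz)+(xy)(wz)$ lets a $p$-divisibility among some of the labels force it at another, and the work in Proposition~\ref{prop:pm} is to organise such propagation steps so that divisibility by $p$ spreads from the given $(p+1)$-subpolygon across the whole polygon. Everything in the deduction of the Corollary after that point is formal, as sketched above.
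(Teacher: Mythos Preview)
Your argument is correct and is exactly the combination the paper has in mind: divide by the greatest common divisor of all labels, use Proposition~\ref{prop:pm} in the contrapositive to see that the quotient frieze satisfies condition~(\ref{cond:p+1}), apply the Theorem, and scale back up. The paper does not spell out these steps but merely cites the Theorem together with Proposition~\ref{prop:pm}; your write-up supplies the details faithfully.
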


The proof of the main result (see Theorem \ref{thm:mgon} below)
does not use the earlier solution for triangles,
that is, we get \cite[Theorem 5.12]{CHJ} as an immediate corollary of the 
above theorem. The two directions of the if and only if statement of the
theorem are proven separately in Section \ref{sec:proof}. The proof of
sufficiency is constructive, that is, we give an explicit algorithm to
compute a Conway-Coxeter frieze containing a given frieze with coefficients
satisfying Conditions (\ref{cond0:gcd}) and (\ref{cond0:p+1}) as a subpolygon. 
Section \ref{sec:example} contains a detailed example. 
In general, these Conway-Coxeter friezes are not unique. However, 
our algorithm yields all possible Conway-Coxeter friezes that contain a 
given frieze with coefficients, because each step in the induction 
allows choices to be made and this can lead to 
several different extensions.

\section{Frieze patterns with coefficients}
\label{sec:definition}

In this section we collect the necessary definitions and fundamental properties
of frieze patterns with coefficients. This concept goes back to an unpublished
manuscript by Propp \cite{Propp}. A general theory of frieze patterns with
coefficients has recently been developed in \cite{CHJ}. 

Although in this paper we are only dealing with frieze patterns over the natural 
numbers, we reproduce the basic definition from \cite{CHJ} in a more
general form allowing arbitrary complex numbers as entries.

\begin{Definition} \label{def:frieze}
Let $R\subseteq\CC$ be a subset of the complex numbers.
Let $n\in \mathbb{Z}_{\ge 0}$.

A \emph{frieze pattern with coefficients of height $n$} over $R$ 
is an infinite array of the form
\[
\begin{array}{ccccccccccc}
 & \ddots & & & &\ddots  & & & \\
0 & c_{i-1,i} & c_{i-1,i+1} & c_{i-1,i+2} & \cdots & \cdots & c_{i-1,n+i} & c_{i-1,n+i+1} & 0 & & \\
& 0 & c_{i,i+1} & c_{i,i+2} & c_{i,i+3} & \cdots & \cdots & c_{i,n+i+1} & c_{i,n+i+2} & 0 & \\
& & 0 & c_{i+1,i+2} & c_{i+1,i+3} & c_{i+1,i+4} & \cdots & \cdots & c_{i+1,n+i+2} & c_{i+1,n+i+3} & 0 \\
 & & & & \ddots  & & & &\ddots  & 
\end{array}
\]
where we also set $c_{i,i}=0=c_{i,n+i+3}$ for all $i\in \mathbb{Z}$,
such that the following holds:
\begin{enumerate}
    \item[{(i)}] $c_{i,j}\in R$ for all $i\in\mathbb{Z}$ and $i< j< n+i+3$.
    \item[{(ii)}] $c_{i,i+1}\neq 0$ for all $i\in \mathbb{Z}$.
    \item[{(iii)}] For every
(complete) adjacent $2\times 2$-submatrix $\begin{pmatrix} c_{i,j} & c_{i,j+1}\\
c_{i+1,j} & c_{i+1,j+1} \end{pmatrix}$ we have
\begin{equation*}\tag{$E_{i,j}$}\label{eq:local}
c_{i,j} c_{i+1,j+1} - c_{i,j+1}c_{i+1,j} = c_{i+1,n+i+3}c_{j,j+1}.
\end{equation*}
\end{enumerate}
\end{Definition}

\begin{Remark}
\begin{enumerate}
    \item Classic frieze patterns, as introduced by Coxeter \cite{Cox71}, 
are those frieze patterns with coefficients with $c_{i,i+1}=1$ for all $i\in \mathbb{Z}$.
A Conway-Coxeter frieze pattern is a frieze pattern with coefficients over $\mathbb{Z}_{> 0}$ with $c_{i,i+1}=1$ for all $i\in \mathbb{Z}$. A classic result of Conway and Coxeter states that these frieze patterns are in bijection with triangulations of regular polygons, see \cite{CC73}.
\item There is a close connection between frieze patterns and Fomin and
Zelevinsky's cluster algebras. Namely, 
starting with a set of indeterminates on a row in the frieze pattern,
the frieze conditions ($E_{i,j}$) produce the cluster variables of the
cluster algebra (of Dynkin type $A$). Whereas the classic Conway-Coxeter frieze
patterns correspond to cluster algebras without coefficients, the more general
frieze patterns with coefficients are linked to cluster algebras with coefficients.
From the cluster algebras perspective this is the main motivation to study 
frieze patterns with coefficients.
\end{enumerate}
\end{Remark}

In general, there are too many frieze patterns
with coefficients to expect a satisfactory theory, even in the case of classic frieze 
patterns,
see \cite{Cuntz-wild} for an illustration of the case of wild $\SL_3$-frieze patterns. Therefore, it is very common in the literature to
restrict to tame frieze patterns. Many interesting frieze patterns 
are tame, e.g.\ all frieze patterns without zero entries, see 
\cite[Proposition 2.4]{CHJ} for a proof of this well-known fact.

\begin{Definition} \label{def:tame}
Let $\mathcal{C}$ be a frieze pattern with coefficients as in Definition \ref{def:frieze}.
Then $\mathcal{C}$ is called {\em tame} if every complete adjacent $3\times 3$-submatrix of 
$\mathcal{C}$ has determinant 0.
\end{Definition}

The entries of a tame frieze pattern with coefficients are closely linked
by many remarkable equations (in addition to the defining equations ($E_{i,j}$)
in Definition \ref{def:frieze}). We restate some results from \cite{CHJ}
which are relevant for the present paper.

First, the entries in a tame frieze patterns are invariant under a glide symmetry.

\begin{Proposition}(\cite[Theorem 2.4]{CHJ}) \label{thm:glide}
Let $R\subseteq \mathbb{C}$ be a subset.
Let $\mathcal{C}=(c_{i,j})$ be a tame frieze pattern with coefficients over $R$ of height
$n$. Then for all entries of $\mathcal{C}$ we have
$c_{i,j} = c_{j,n+i+3}.
$
\end{Proposition}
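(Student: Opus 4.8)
The plan is to establish the glide symmetry $c_{i,j} = c_{j,n+i+3}$ by combining the defining Ptolemy-type relations $(E_{i,j})$ with the tameness hypothesis. First I would set up the bookkeeping: for a tame frieze pattern, the vanishing of every $3\times3$ adjacent minor means that consecutive rows (or columns) of the array, viewed as vectors, satisfy a three-term linear recursion with coefficients determined by the frieze entries themselves. Concretely, I would show that tameness forces the existence of a sequence of scalars so that each row is a specific linear combination of the two rows directly above it; this is the standard consequence of a banded matrix having all $3\times3$ minors equal to zero. The key preliminary step is to identify these scalars explicitly in terms of the diagonal-adjacent entries $c_{i-1,i+1}$ and the coefficient entries $c_{i+1,n+i+3}$, using the defining equations $(E_{i,j})$ to pin them down.

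Next I would propagate this recursion. Starting from the boundary zeros $c_{i,i}=0$ and $c_{i,n+i+3}=0$ that bracket each row, the recursion determines every entry of row $i+1$ from rows $i-1$ and $i$, hence inductively every entry of the whole array is determined by two consecutive rows together with the coefficient sequence $(c_{i+1,n+i+3})_{i\in\mathbb{Z}}$. The glide symmetry should then fall out by comparing two ways of reading the array: the entry $c_{i,j}$ computed by pushing the recursion forward (downward) from rows $i-1,i$, versus the entry $c_{j,n+i+3}$, which lies in a row $n+i+3$ positions later. The point is that the configuration of zeros surrounding row $j$ is a translate of the configuration surrounding row $i$, and the same recursion with the same coefficients, run from the appropriate starting data, must produce the glide-reflected value. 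I would make this precise by an induction on the "distance" of the entry from the left-hand boundary zero of its row, with the base case being the two entries $c_{i,i+1}$ and the adjacent one, where the identity is either immediate from the boundary conditions or a direct application of a single equation $(E_{i,j})$.

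The main obstacle I anticipate is the base case and the correct alignment of indices: one must check that the starting data for the recursion really does match up under the shift $j \mapsto n+i+3$, i.e.\ that $c_{i,i+1}$ corresponds to $c_{i+1, n+i+4}$ and that the two "seed" entries on each side of the identity satisfy it. A secondary bookkeeping difficulty is that the coefficient entries $c_{i+1,n+i+3}$ appearing on the right-hand sides of the equations $(E_{i,j})$ must themselves be shown to be glide-invariant (or, more precisely, consistently placed) for the inductive step to close; this is where tameness is used a second time, to guarantee that the linear recursion is genuinely the same recursion before and after the shift. Once the indices are lined up correctly, the inductive step is a routine manipulation of the relation $(E_{i,j})$ expressing $c_{i+1,j+1}$ in terms of entries already known to satisfy the symmetry, so no heavy computation is needed beyond careful index chasing. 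Since this is precisely \cite[Theorem 2.4]{CHJ}, I would in practice simply cite it, but the sketch above is how I would reconstruct the argument from scratch.
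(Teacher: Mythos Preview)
The paper does not prove this statement at all: the proposition is stated with the attribution \cite[Theorem 2.4]{CHJ} and no argument is given in the present paper. Your closing remark that in practice you would simply cite \cite[Theorem 2.4]{CHJ} is therefore exactly what the paper does, and nothing more is required.

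Your reconstruction sketch is plausible in outline---tameness does yield a three-term linear recursion along rows (or columns), and the glide symmetry in \cite{CHJ} is indeed established by an induction that propagates this recursion from the boundary zeros. So your sketch is in the right spirit, but it is additional content beyond what the present paper provides, and since the statement is quoted verbatim from \cite{CHJ} there is no ``paper's own proof'' here to compare against.
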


This implies that
the triangular region shown
in Figure \ref{fig:glide} yields 
a fundamental domain for the action of the glide symmetry.
Note that the indices of the entries
are in bijection with the edges and diagonals of a regular $(n+3)$-gon (viewed as pairs of vertices). This means that we can view every tame 
frieze pattern with coefficients of height $n$ over $R$ as a map on the edges and
diagonals of a regular $(n+3)$-gon with values in $R$.
\begin{figure}
$$
\begin{array}{ccccccccc}
& ~~ & ~\ddots~ & ~ & ~~ & ~~ & ~ & \ddots~ & ~\\ 
& ~0~ & ~c_{1,2}~ & ~c_{1,3}~ & ~\ldots~ & ~\ldots~ & ~\ldots~ & ~c_{1,n+3}~ & ~0~  \\ 
& & ~0~ & ~c_{2,3}~ & ~c_{2,4}~ & ~\ldots~ & ~\ldots~ & ~c_{2,n+3}~ & ~\ddots~  \\
& &  & ~\ddots~ & ~\ddots~ & ~\ddots~ & ~~ & ~\vdots~ & ~ \\
& &  & ~~ & ~\ddots~ & ~\ddots~ & ~\ddots~ & ~\vdots~ & ~\\
& &  & ~~ & ~~ & ~0~ & ~c_{n+1,n+2}~ & ~c_{n+1,n+3}~ & ~\\
& &  & ~~ & ~~ & ~~ & ~0~ & ~c_{n+2,n+3}~ & ~ \ddots \\
& &  & ~~ & ~~ & ~~ & ~~ & ~0~ & ~\ddots \\
& &  & ~~ & ~~ & ~~ & ~~ & ~~ & ~0~ \\
\end{array}
$$
\caption{Fundamental domain for the glide symmetry of a frieze pattern
with coefficients.\label{fig:glide}}
\end{figure}

\bigskip

\noindent
{\bf Convention:} 
We use the notion (tame) {\em frieze pattern with coefficients} for an infinite array 
as in Definition \ref{def:frieze} and the notion (tame) {\em frieze with coefficients} 
for a corresponding map from edges and diagonals of a regular polygon. 

\bigskip

Secondly, 
the entries in a frieze (pattern) with coefficients satisfy Ptolemy relations,
as visualized in Figure \ref{fig:Ptolemy}.

\begin{Definition} \label{def:ptolemy}
Let $\mathcal{C}=(c_{i,j})$ be a tame frieze with coefficients over $R\subseteq \mathbb{C}$
on a regular $m$-gon. We say that $\mathcal{C}$ satisfies the Ptolemy relation for 
the indices $1\le i\le j\le k\le \ell\le m$ if the following equation holds:
\begin{equation*}\tag{$E_{i,j,k,\ell}$}\label{eq:ptolemy}
c_{i,k} c_{j,\ell} = c_{i,\ell} c_{j,k} + c_{i,j} c_{k,\ell}.
\end{equation*}
\end{Definition}

\begin{figure}
  \begin{tikzpicture}[auto]
    \node[name=s, draw, shape=regular polygon, regular polygon sides=500, minimum size=4cm] {};
    \draw[thick] (s.corner 60) to (s.corner 180);
    \draw[thick] (s.corner 180) to (s.corner 300);
    \draw[thick] (s.corner 300) to (s.corner 400);
    \draw[thick] (s.corner 400) to (s.corner 60);
    \draw[thick] (s.corner 60) to (s.corner 300);
    \draw[thick] (s.corner 180) to (s.corner 400);
    
    \draw[shift=(s.corner 60)]  node[above]  {{\small $i$}};
    \draw[shift=(s.corner 180)]  node[left]  {{\small $j$}};
    \draw[shift=(s.corner 300)]  node[below]  {{\small $k$}};
    \draw[shift=(s.corner 400)]  node[right]  {{\small $\ell$}};
   \end{tikzpicture}
   \caption{The Ptolemy relation  (\ref{eq:ptolemy}). 
   \label{fig:Ptolemy}}
\end{figure}

An old result by Coxeter (see \cite[Equation (5.7)]{Cox71})
states that classic friezes satisfy all Ptolemy relations and this can be extended 
to friezes with coefficients.

\begin{Proposition}(\cite[Theorem 2.6]{CHJ})  \label{thm:ptolemy}
Every tame frieze with coefficients
over some subset $R\subseteq \mathbb{C}$
satisfies all Ptolemy relations.
\end{Proposition}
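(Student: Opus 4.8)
The plan is to realise every tame frieze with coefficients over $\CC$ by a finite sequence of vectors in $\CC^2$, after which the Ptolemy relations turn into instances of the $2\times 2$ Grassmann--Pl\"ucker relation. The first step identifies the defining local equations with a special case of Ptolemy: by the glide symmetry of Proposition~\ref{thm:glide} we have $c_{i,i+1}=c_{i+1,n+i+3}$, so equation $(E_{i,j})$ of Definition~\ref{def:frieze} reads
\[
c_{i,j}c_{i+1,j+1}-c_{i,j+1}c_{i+1,j}=c_{i,i+1}c_{j,j+1},
\]
which is exactly the Ptolemy relation $(E_{i,i+1,j,j+1})$. Thus $(E_{i,j,k,\ell})$ holds whenever $j=i+1$ and $\ell=k+1$; it is also trivially true whenever two of the four indices coincide, because $c_{i,i}=0$.

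Next I would attach to every vertex $j\in\{1,\dots,n+3\}$ a vector $v_j\in\CC^2$ such that $c_{i,j}=\det(v_i\mid v_j)$ for all $i,j$; this suffices, since all indices in a Ptolemy relation on the $(n+3)$-gon lie in this range. Set $v_1=(1,0)^{\mathsf T}$ and choose $v_2$ with $\det(v_1\mid v_2)=c_{1,2}$. Since any two consecutive vectors $v_{j-1},v_j$ form a basis of $\CC^2$ (their determinant equals $c_{j-1,j}\neq 0$), one defines $v_{j+1}$ recursively as the unique vector with $\det(v_{j-1}\mid v_{j+1})=c_{j-1,j+1}$ and $\det(v_j\mid v_{j+1})=c_{j,j+1}$. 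By construction $c_{i,j}=\det(v_i\mid v_j)$ whenever $|i-j|\le 2$.

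The core step is to upgrade this to all pairs, by induction on $j-i$. For $j-i\ge 3$ the doubly-adjacent relation $(E_{i,j-1})$ gives $c_{i,j}\,c_{i+1,j-1}=c_{i,j-1}c_{i+1,j}-c_{i,i+1}c_{j-1,j}$, where every factor on the right-hand side is an entry at distance $<j-i$. Replacing each of them by the corresponding minor (induction hypothesis) and applying the Pl\"ucker relation
\[
\det(v_i\mid v_j)\det(v_{i+1}\mid v_{j-1})=\det(v_i\mid v_{j-1})\det(v_{i+1}\mid v_j)-\det(v_i\mid v_{i+1})\det(v_{j-1}\mid v_j)
\]
for the four vectors $v_i,v_{i+1},v_{j-1},v_j$ yields $c_{i,j}\,c_{i+1,j-1}=\det(v_i\mid v_j)\,c_{i+1,j-1}$, hence $c_{i,j}=\det(v_i\mid v_j)$ as long as $c_{i+1,j-1}\neq 0$. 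Once $c_{i,j}=\det(v_i\mid v_j)$ is established for every pair, the relation $(E_{i,j,k,\ell})$ is just the Pl\"ucker relation for $v_i,v_j,v_k,v_\ell$ in the $2$-dimensional space $\CC^2$ and therefore holds identically.

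The step I expect to be the real obstacle is precisely this last cancellation: tame friezes with coefficients may have zero entries, and when $c_{i+1,j-1}=\det(v_{i+1}\mid v_{j-1})=0$ the argument does not conclude directly. I would deal with this either by a short case analysis — if $\det(v_{i+1}\mid v_{j-1})=0$ then $v_{j-1}$ is proportional to $v_{i+1}$, and $c_{i,j}$ is recovered from a neighbouring defining relation such as $(E_{i-1,j-1})$ or the relation along the opposite diagonal — or, more cheaply, by a Zariski-density argument: $(E_{i,j,k,\ell})$ is a polynomial identity in the entries which, by the above, holds on the dense locus of zero-free tame friezes, hence on all tame friezes with coefficients. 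A vector-free alternative would be induction on the number $m=n+3$ of vertices, deleting a vertex not among $i,j,k,\ell$ and invoking the base case $m=4$ (again the local relation); this, however, needs the fact that a subpolygon of a tame frieze with coefficients is again one, independently of the Ptolemy relations.
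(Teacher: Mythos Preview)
The paper does not give its own proof of this proposition; it is quoted from \cite{CHJ} without argument, so there is nothing in the present paper to compare against. Your Pl\"ucker approach---realising the entries as $2\times 2$ minors $c_{i,j}=\det(v_i\mid v_j)$ and reading off every Ptolemy relation as a Grassmann--Pl\"ucker identity---is the standard and correct route, and your reduction of the local rule $(E_{i,j})$ to the special Ptolemy relation $(E_{i,i+1,j,j+1})$ via the glide symmetry is fine.

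The genuine gap is the zero case, and the concrete patches you propose do not close it. The relation $(E_{i-1,j-1})$ you invoke contains $c_{i-1,j}$, whose index distance $j-i+1$ exceeds that of $c_{i,j}$, so it lies outside the induction hypothesis; the ``relation along the opposite diagonal'' has the same defect. The vertex-deletion alternative is, as you yourself note, circular. The Zariski-density argument can be made rigorous, but as stated it presupposes a parametrisation of tame friezes in which every $c_{i,j}$ is a nonzero rational function of the parameters, and producing such a parametrisation already requires the structural input you are trying to sidestep.

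What actually repairs the argument is a direct use of tameness, which so far you have only invoked through the glide symmetry. From the vanishing of each adjacent $3\times 3$ minor, together with the fact that the adjacent $2\times 2$ minor in columns $j-2,j-1$ and rows $i+1,i+2$ equals $c_{i+1,i+2}\,c_{j-2,j-1}\neq 0$, one obtains a global three-term column recurrence: there exist scalars $\alpha_j,\beta_j$ with $c_{i,j}=\alpha_j\,c_{i,j-1}+\beta_j\,c_{i,j-2}$ for all admissible $i$. The vectors $v_j$ satisfy the same recurrence by construction, so induction on $j$ (with $i$ arbitrary) yields $c_{i,j}=\det(v_i\mid v_j)$ without ever dividing by an interior entry. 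This is precisely where tameness is indispensable: when $c_{i+1,j-1}=0$ the entry $c_{i,j}$ is \emph{not} determined by the $2\times 2$ rules alone, and without the $3\times 3$ condition the Ptolemy relation can fail.
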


\section{$n$-gons in Conway-Coxeter friezes}
\label{sec:ngons}

From now on we consider frieze patterns with coefficients over positive integers. 

Let us take any classic Conway-Coxeter 
frieze $\mathcal{C}$ on an $n$-gon, that is, a map
from edges and diagonals of a regular
polygon to the positive integers such that all edges of the $n$-gon are mapped to 1.
Restricting this map to any subpolygon of the
$n$-gon yields a frieze with coefficients. In fact,
the restricted
map still satisfies all Ptolemy relations of the subpolygon. See Figure \ref{fig:cut}
for an example. 

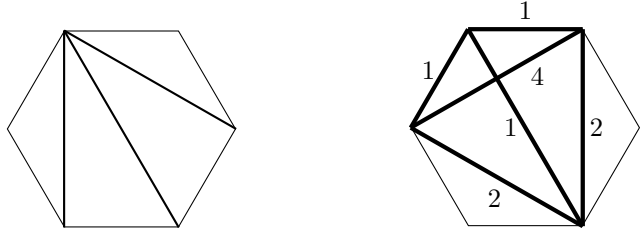
\begin{figure}
  \begin{tikzpicture}[auto]
    \node[name=s, draw, shape=regular polygon, regular polygon sides=6, minimum size=3cm] {};
    \draw[thick] (s.corner 2) to (s.corner 4);
    \draw[thick] (s.corner 2) to (s.corner 5);
    \draw[thick] (s.corner 2) to (s.corner 6);
   \end{tikzpicture}
\mbox{\hskip2cm}
\begin{tikzpicture}[auto]
    \node[name=s, draw, shape=regular polygon, regular polygon sides=6, minimum size=3cm] {};
    \draw[ultra thick] (s.corner 1) to (s.corner 3);
    \draw[ultra thick] (s.corner 3) to (s.corner 5);
    \draw[ultra thick] (s.corner 2) to (s.corner 5);
    \draw[ultra thick] (s.corner 1) to (s.corner 5);
    \draw[ultra thick] (s.corner 1) to (s.corner 2);
    \draw[ultra thick] (s.corner 2) to (s.corner 3);
    
    \draw[shift=(s.side 1)]  node[above]  {{\small 1}};    
    \draw[shift=(s.corner 2)]  node[below left=12pt]  {{\small 1}};
    \draw[shift=(s.corner 4)]  node[above right=5pt]  {{\small 2}};
    \draw[shift=(s.corner 6)]  node[left=10pt]  {{\small 2}};
    \draw[shift=(s.corner 6)]  node[left=42pt]  {{\small 1}};
    \draw[shift=(s.corner 1)]  node[below left=15pt]  {{\small 4}};
   \end{tikzpicture}
   \caption{A frieze with coefficients cut out of a Conway-Coxeter frieze. 
   \label{fig:cut}}
\end{figure}

In \cite{CHJ} we addressed the fundamental question which friezes with coefficients 
actually appear as subpolygons of Conway-Coxeter friezes and obtained the 
following complete answer for the special case of triangles.

\begin{Theorem}(\cite[Theorem 5.12]{CHJ}) \label{thm:triangle}
Let $a,b,c\in \mathbb{N}$. The triple $(a,b,c)$ appears as labels of a triangle
in some Conway-Coxeter frieze if and only if the following two conditions are
satisfied:
\begin{enumerate}
\item $\gcd(a,b)=\gcd(b,c)=\gcd(a,c)$.
\item $\nu_2(a)=\nu_2(b)=\nu_2(c)=0$ or $|\{\nu_2(a),\nu_2(b),\nu_2(c)|>1$ where
$\nu_2(\cdot )$ denotes the 2-valuation.
\end{enumerate}
\end{Theorem}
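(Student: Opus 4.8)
The plan is to prove the two implications of the equivalence separately. Throughout I identify a Conway--Coxeter frieze with the map $\mathcal{C}=(c_{i,j})$ on the edges and diagonals of a regular $N$-gon, and I write the triangle under consideration as $(a,b,c)=(c_{i,j},c_{j,k},c_{i,k})$ for vertices $i<j<k$; a Conway--Coxeter frieze is tame, so by Proposition~\ref{thm:ptolemy} all Ptolemy relations are available. The one piece of local arithmetic I will use repeatedly is that \emph{neighbouring} entries are coprime: since all edges have value $1$, the right-hand side of the frieze rule $(E_{i,j})$ equals $1$, i.e.\ $c_{i,j}c_{i+1,j+1}-c_{i,j+1}c_{i+1,j}=1$, and reindexing this gives $\gcd(c_{p,q-1},c_{p,q})=1$ and $\gcd(c_{p-1,q},c_{p,q})=1$ whenever the entries are positive. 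If $a=1$ or $b=1$ then, since $c_{i,i+1}=1$, the indices satisfy $j=i+1$ resp.\ $k=j+1$, and these degenerate cases are checked directly from the coprimality facts above; so below I assume $a,b\ge 2$, hence $j\ge i+2$ and $k\ge j+2$, so that the entries $c_{i,j-1}$ and $c_{j,k-1}$ I introduce are genuine.

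For \textbf{necessity}: the Ptolemy relation $(E_{i,j-1,j,k})$, using $c_{j-1,j}=1$, reads $a\,c_{j-1,k}=c+c_{i,j-1}\,b$. Hence $c\in a\mathbb{Z}+b\mathbb{Z}$, so $\gcd(a,b)\mid c$; and $\gcd(a,c)$ divides $c_{i,j-1}b=a\,c_{j-1,k}-c$ while being coprime to $c_{i,j-1}$ (as $\gcd(c_{i,j-1},a)=1$), so $\gcd(a,c)\mid b$. Symmetrically $(E_{i,j,k-1,k})$ gives $a=c_{i,k-1}\,b-c_{j,k-1}\,c\in b\mathbb{Z}+c\mathbb{Z}$, so $\gcd(b,c)\mid a$; together these three divisibilities force $\gcd(a,b)=\gcd(a,c)=\gcd(b,c)$, which is Condition~(1). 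For Condition~(2), suppose for contradiction that $\nu_2(a)=\nu_2(b)=\nu_2(c)=v\ge1$ and write $a=2^va_0,\ b=2^vb_0,\ c=2^vc_0$ with $a_0,b_0,c_0$ odd. Since $a$ is even and $\gcd(c_{i,j-1},a)=1$, the entry $c_{i,j-1}$ is odd; dividing $a\,c_{j-1,k}=c+c_{i,j-1}\,b$ by $2^v$ and reducing modulo $2$ gives $c_{j-1,k}\equiv1+c_{i,j-1}\equiv0\pmod 2$. But $\gcd(c_{j-1,k},c_{j,k})=1$ whereas $c_{j,k}=b$ is even --- a contradiction. So the three $2$-valuations cannot all coincide and be positive, which is exactly Condition~(2).

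For \textbf{sufficiency} --- the substantial direction --- I would give an explicit induction producing a Conway--Coxeter frieze that realises a given admissible triple $(a,b,c)$ as a subpolygon. The base case is the unique triangulation of a triangle, giving $(1,1,1)$. For the inductive step, after ordering $a\le b\le c$, I would run a ``Euclidean'' reduction along the three chords: using a continuant/flip identity for the frieze entries (morally the displayed identities above, read backwards) to pass from a realisation of a strictly smaller admissible triple to one of $(a,b,c)$, modifying the ambient triangulation so that $a+b+c$ strictly decreases. Condition~(2) enters precisely in guaranteeing that the descent reaches the base case: the only way a reduction gets stuck away from $(1,1,1)$ is by reaching a triple $(d,d,d)$ with $d\ge2$, and such a triple is ruled out exactly because its three $2$-valuations all equal $\nu_2(d)\ge1$.

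The hard part will be designing and controlling this reduction step: one must arrange that a size measure strictly decreases, that the gcd condition~(1) is preserved (a naive subtraction such as $c\mapsto c-b$ already destroys it, as $(2,3,5)\mapsto(2,3,2)$ shows), and that the $2$-adic condition~(2) is preserved and genuinely used --- all while keeping explicit hold of the surrounding triangulation, since the output must be a true Conway--Coxeter frieze and not merely a frieze with coefficients over $\mathbb{Z}$. An alternative I would consider is to parametrise all realisable triples directly, via continued-fraction data attached to the three ``corner fans'' of diagonals meeting at $i$, $j$, $k$, and to verify that Conditions~(1)--(2) cut out exactly that image; but proving such a parametrisation both complete and correctly conditioned raises essentially the same difficulty. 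In either route the necessity half is short and self-contained, and the full weight of the theorem rests on the constructive realisation.
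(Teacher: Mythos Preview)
Your necessity argument is correct and self-contained. One small slip: you write that $a=1$ forces $j=i+1$, but the implication runs the other way --- what you actually need to set aside are the cases $j=i+1$ and $k=j+1$ (in which $a=1$ resp.\ $b=1$ automatically), since only then do the auxiliary entries $c_{i,j-1}$, $c_{j,k-1}$ degenerate to zero; your main argument works for all other $(i,j,k)$ regardless of whether $a$ or $b$ happens to equal~$1$. The paper obtains necessity as the special case $n=3$ of Theorem~\ref{thm:mgon}, hence via Lemma~\ref{lem:gcd} and Proposition~\ref{prop:pm}; your direct $2$-adic computation for Condition~(2) is a pleasant shortcut in the triangle case.

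Your sufficiency direction, however, is not a proof but a programme. You propose a Euclidean-style descent on $a+b+c$, but you yourself observe that the obvious move $c\mapsto c-b$ destroys Condition~(1), and you never supply a replacement move that (i)~strictly decreases a size measure, (ii)~preserves both conditions, and (iii)~comes with an explicit recipe for enlarging the ambient triangulation. Without such a move there is no inductive step, and your claim that the only obstruction is reaching a triple $(d,d,d)$ is unjustified. The paper takes a different route entirely: it proves the general $n$-gon statement (Theorem~\ref{thm:mgon}) by an \emph{outward} construction --- given a frieze with coefficients on an $n$-gon satisfying the two conditions, it attaches one new vertex at a time, choosing the new diagonal labels prime by prime via Lemmas~\ref{lem:yip} and~\ref{lem:yj} and assembling them with the Chinese Remainder Theorem, until all boundary edges have label~$1$. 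Specialising to $n=3$ yields Theorem~\ref{thm:triangle} as a corollary. This sidesteps the invariance problems you flag, at the cost of proving a stronger statement; if you want to rescue a direct descent for triangles, you must exhibit an explicit admissible reduction and verify (i)--(iii) for it.
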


The main aim of this paper is a generalization of the previous theorem 
to arbitrary subpolygons in Conway-Coxeter friezes. That is, we give  
arithmetic conditions on the entries of a frieze with coefficients
which characterize whether
or not the frieze with coefficients appears as a subpolygon in some 
Conway-Coxeter frieze. 
The following theorem is the main result of this paper.

\begin{Theorem} \label{thm:mgon}
Let $\mathcal{C}$ be a frieze with coefficients 
on an $n$-gon over the positive integers. Then $\mathcal{C}$ appears as a
subpolygon
of some Conway-Coxeter frieze if and only if the following conditions are
satisfied:
\begin{enumerate}
\item \label{cond:gcd}
For any triangle $(a,b,c)$ in $\mathcal{C}$ we have
$\gcd(a,b)=\gcd(b,c)=\gcd(a,c)$.
\item \label{cond:p+1}
Let $p<n$ be a prime number. Then for each $(p+1)$-subpolygon $\mathcal{D}$
of $\mathcal{C}$ the labels of edges and diagonals in $\mathcal{D}$ are either
all not divisible by $p$ or they do not all have the same $p$-valuation.
\end{enumerate}
\end{Theorem}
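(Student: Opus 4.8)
The plan is to prove the two implications separately. For \emph{necessity}, suppose $\mathcal{C}$ is cut out of a Conway--Coxeter frieze $\mathcal{E}$ on an $N$-gon. Condition (\ref{cond:gcd}) is already known from \cite{CHJ} (it follows quickly from the Ptolemy relations, since any triangle $(a,b,c)$ of $\mathcal{C}$ sits inside $\mathcal{E}$ and one can express $\gcd$'s via the unimodular rows of $\mathcal{E}$); alternatively it is an immediate consequence of Theorem \ref{thm:triangle}. For condition (\ref{cond:p+1}), I would argue by contradiction: assume some $(p+1)$-subpolygon $\mathcal{D}$ of $\mathcal{C}$ has all $p+1 \choose 2$ labels divisible by $p$ and all with the \emph{same} $p$-valuation $v$. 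Dividing through, it suffices to treat $v=1$, i.e.\ all labels of $\mathcal{D}$ are $p\cdot(\text{unit mod }p)$. Now use that $\mathcal{D}$, being a subpolygon of the Conway--Coxeter frieze $\mathcal{E}$, is itself a tame frieze with coefficients, so it satisfies all Ptolemy relations (Proposition \ref{thm:ptolemy}); reducing the whole configuration modulo $p^2$ should force a contradiction with the fact that $\mathcal{E}$ has a row of $1$'s. The cleanest route is likely: the coefficients of $\mathcal{D}$ as a frieze with coefficients are the ``edge'' values $c_{i,i+1}$ of the corresponding frieze \emph{pattern}, and there is a formula (from \cite{CHJ}) expressing a classic-frieze entry spanning $\mathcal{D}$ in terms of a continuant / product over the triangulation; if every entry of $\mathcal{D}$ is $\equiv 0 \pmod p$ with valuation exactly $1$, the relevant $\mathrm{SL}_2$-matrix product over a fundamental domain of $\mathcal{D}$ reduces mod $p$ to a product of matrices of the form $\left(\begin{smallmatrix} 0 & -1 \\ 1 & 0\end{smallmatrix}\right)$ times scalars, and a counting argument on the number of vertices ($p+1$ of them) pins down the trace or determinant to be incompatible with coming from $\mathcal{E}$.

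\textbf{Sufficiency (the constructive part).}
This is the harder direction and, as the introduction announces, is proven by an explicit algorithm. Given $\mathcal{C}$ on an $n$-gon satisfying (\ref{cond:gcd}) and (\ref{cond:p+1}), I would build a Conway--Coxeter frieze containing it by induction, enlarging the polygon one vertex at a time. The key observation to exploit: adding a vertex to a frieze with coefficients corresponds, on the level of the quiddity cycle / continued-fraction data, to a local ``mutation'' or insertion move, and the condition that all new edge-values can eventually be brought down to $1$ is governed precisely by divisibility constraints on the existing labels. Concretely, one wants to ``resolve'' the coefficients: if $\mathcal{C}$ has an edge with label $c>1$, pick a prime $p\mid c$; condition (\ref{cond:p+1}) on the $(p+1)$-subpolygons guarantees that one can subdivide an appropriate triangle (insert vertices) so that the $p$ gets distributed — splitting $c$ into factors — without creating a forbidden configuration downstream. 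Iterating over all prime factors of all edge labels eventually produces a frieze all of whose edges are $1$, i.e.\ a Conway--Coxeter frieze, with $\mathcal{C}$ recoverable as the subpolygon on the original $n$ vertices. Showing that the new configuration remains a bona fide frieze with coefficients over $\mathbb{Z}_{>0}$ at each step — i.e.\ that the inserted values are positive integers and the Ptolemy relations persist — is routine given Proposition \ref{thm:ptolemy}, but one must check that the divisibility hypotheses are \emph{inherited} by the smaller polygons that arise after each insertion, so that the induction can continue. That inheritance step is where conditions (\ref{cond:gcd}) and especially (\ref{cond:p+1}) must be used in full strength, for \emph{all} primes $p<n$ simultaneously, not just the one currently being resolved.

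\textbf{Main obstacle.}
I expect the principal difficulty to be proving that the algorithm \emph{terminates with a valid output} — that is, that one can always make a choice of triangle to subdivide and a distribution of prime factors so that (a) the resulting larger polygon still satisfies the analogue of conditions (\ref{cond:gcd}) and (\ref{cond:p+1}) (now possibly for a larger $n$, hence more primes to worry about), and (b) no edge label ever gets ``stuck'' above $1$. Equivalently: one needs a combinatorial lemma saying that the $p$-valuation data on a frieze with coefficients satisfying (\ref{cond:p+1}) can be ``unfolded'' to a triangulation, and this is essentially a statement about which valuation patterns on polygons are realizable by nested continuant products. The $(p+1)$-subpolygon condition is exactly the local obstruction — a $(p+1)$-gon with all labels of equal positive $p$-valuation is the minimal non-realizable pattern — and the crux is to show it is also the \emph{only} obstruction, globally, via the induction. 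Handling all primes $p < n$ at once, and keeping track of how the set of relevant primes grows as vertices are added, is the bookkeeping that makes this delicate; I would isolate it as a separate lemma about $p$-valuations of frieze entries before assembling the algorithm.
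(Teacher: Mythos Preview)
Your overall architecture (two directions, with sufficiency by iteratively enlarging the polygon) matches the paper, but both halves have real gaps compared with what the paper actually does.

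\textbf{Necessity.} The paper does not go through $\SL_2$ matrix products or trace computations. Instead it proves directly (Proposition~\ref{prop:pm}) that if $\mathcal{C}$ satisfies Condition~(\ref{cond:gcd}) and contains a $(p+1)$-subpolygon $\mathcal{D}$ whose labels all have $p$-valuation exactly $m$, then \emph{every} label of $\mathcal{C}$ is divisible by $p^m$. Applied to the ambient Conway--Coxeter frieze $\mathcal{E}$ (whose edges carry $1$), this forces $m=0$. The proof of Proposition~\ref{prop:pm} is a residue-class counting argument using only Ptolemy relations: if some diagonal from a vertex $v$ outside $\mathcal{D}$ to a vertex of $\mathcal{D}$ were not divisible by $p$, then the $p-1$ Ptolemy relations crossing $(0,p)$ force $y_0=c_{0,v}$ to avoid $p-1$ \emph{distinct} nonzero residues $\pmod p$, while the gcd condition also forces $y_0\not\equiv 0$, a contradiction. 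Your matrix-product sketch may be salvageable, but as stated (``should force a contradiction'', ``pins down the trace'') it is not a proof, and you never identify what the entries of those matrices are for a frieze \emph{with coefficients} rather than a classic one.

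\textbf{Sufficiency.} Here you are missing the key mechanism. The paper does \emph{not} resolve one prime at a time by subdividing triangles. It picks a boundary edge with label $c_0>1$ and inserts a \emph{single} new vertex so that the two new boundary edges carry labels $1$ and $y_0$ with $0<y_0<c_0$; this strict decrease is what gives termination, with no worry about ``the set of relevant primes growing''. The value $y_0$ is determined modulo $c_0$ by solving, for each prime $p\mid c_0$ separately, a congruence $\pmod{p^{\nu_p(c_0)}}$ and then invoking the Chinese Remainder Theorem. The crucial lemma (Lemma~\ref{lem:yip}) is again a residue-class count: one needs $y_{i_p}$ avoiding at most one residue class per vertex $j$, and Condition~(\ref{cond:p+1}) says there cannot be $p-1$ vertices ruling out $p-1$ \emph{different} classes, so a valid choice exists. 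Once $y_0$ is fixed, the remaining new labels $y_j=(c_jy_0+c_{0,j})/c_0$ are forced by Ptolemy and are positive integers by construction. Finally --- and this is a point you flagged as the main obstacle --- the inheritance of Condition~(\ref{cond:p+1}) for the enlarged polygon $\widetilde{\mathcal{C}}$ is obtained almost for free: any offending $(q+1)$-subpolygon would involve the new vertex, which is adjacent to an edge labelled $1$, and then the necessity argument (Proposition~\ref{prop:pm}) applied inside $\widetilde{\mathcal{C}}$ already gives a contradiction. So the bookkeeping you anticipated dissolves once the induction step is set up correctly.
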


Note that for the special case $n=3$ this gives precisely the criterion of 
Theorem \ref{thm:triangle}. Actually, our proof of the main result 
Theorem \ref{thm:mgon}
does not need the previous result on triangles from \cite{CHJ}, 
so we get Theorem 
\ref{thm:triangle} as a proper corollary of the new result.

\begin{Example} \label{ex:grobe}
There are friezes with coefficients where each triangle appears 
as a subpolygon of a Conway-Coxeter frieze, but the entire frieze does not.
For instance, consider the square with labels as 
in Figure \ref{fig:exsquare}. 
This gives a frieze with coefficients since the Ptolemy relation is satisfied.
All triangles satisfy the conditions from Theorem \ref{thm:triangle}. However,
for the square itself condition (\ref{cond:p+1}) of Theorem \ref{thm:mgon}
fails for $p=3$, so this square can not appear as a subpolygon of a 
Conway-Coxeter frieze. 

This example was first discovered by Grobe in his Master's thesis 
\cite{Grobe}, by a different argument not using Theorem \ref{thm:mgon}. 
\begin{figure}
\begin{tikzpicture}[auto]
    \node[name=s, draw, shape=regular polygon, regular polygon sides=4, minimum size=3cm] {};
    \draw[thick] (s.corner 1) to (s.corner 3);
    \draw[thick] (s.corner 2) to (s.corner 4);
    
    \draw[shift=(s.side 1)]  node[above]  {{\small 3}};   
    \draw[shift=(s.side 3)]  node[below]  {{\small 3}};
    \draw[shift=(s.side 2)]  node[left]  {{\small 3}};   
    \draw[shift=(s.side 4)]  node[right]  {{\small 3}};
    \draw[shift=(s.side 3)]  node[above right=7pt]  {{\small 3}};
    \draw[shift=(s.side 1)]  node[below right=7pt]  {{\small 6}};
   \end{tikzpicture}
   \caption{A frieze with coefficients which is not a subpolygon of a 
   Conway-Coxeter frieze, but all of whose triangles do appear in
   Conway-Coxeter friezes. 
   \label{fig:exsquare}}
\end{figure}
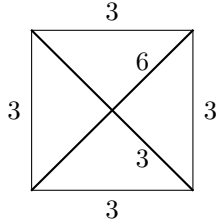
\end{Example}

\begin{Example} \label{ex:p+1}
Condition (\ref{cond:p+1}) imposes to check all prime numbers $p<n$ and the
corresponding $(p+1)$-subpolygons. This is indeed necessary, as the following 
examples show. Note that for $p=3$ this is Example \ref{ex:grobe}
above. 

Let $p$ be any odd prime number. We consider the Conway-Coxeter frieze
on a $(p+1)$-gon given by a fan triangulation,
that is, all diagonals start
at the same vertex; see Figure \ref{fig:fan} for the case $p=11$. 
\begin{figure}
\begin{tikzpicture}[auto]
    \node[name=s, draw, shape=regular polygon, regular polygon sides=12, minimum size=3cm] {};
    \draw[thick] (s.corner 1) to (s.corner 3);
    \draw[thick] (s.corner 1) to (s.corner 4);
    \draw[thick] (s.corner 1) to (s.corner 5);
    \draw[thick] (s.corner 1) to (s.corner 6);
    \draw[thick] (s.corner 1) to (s.corner 7);
    \draw[thick] (s.corner 1) to (s.corner 8);
    \draw[thick] (s.corner 1) to (s.corner 9);
    \draw[thick] (s.corner 1) to (s.corner 10);
    \draw[thick] (s.corner 1) to (s.corner 11);
   \end{tikzpicture}
   \caption{A fan on a dodecagon. 
   \label{fig:fan}}
\end{figure}
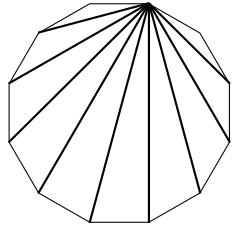
Using Ptolemy relations one checks that the maximal label of a diagonal 
in this frieze is $p-1$ (actually, for each diagonal its label is one more
than the number of diagonals of the fan triangulation it crosses). 
Let $\mathcal{C}$ be the frieze with coefficients
on a $(p+1)$-gon obtained by multiplying the above Conway-Coxeter frieze
by $p$. Then the labels of all edges and diagonals have $p$-valuation 1.
By Theorem \ref{thm:mgon} we see that $\mathcal{C}$ is not a subpolygon 
of a Conway-Coxeter frieze. However, for any prime number $q<p$, all 
$(q+1)$-subpolygons of $\mathcal{C}$ do appear as subpolygons of Conway-Coxeter
friezes, again by Theorem \ref{thm:mgon}; in fact, the corresponding 
$(q+1)$-subpolygons in the Conway-Coxeter frieze clearly satisfy the conditions
of Theorem \ref{thm:mgon} and the validity of these conditions is not 
affected by multiplication with $p$, since $q<p$ are prime numbers. 
\end{Example}

\section{Proof of the main result}
\label{sec:proof}

The aim of this section is to prove Theorem \ref{thm:mgon}.
For clarity, the two directions of the if and only if statement are shown separately.

\subsection{Necessity}
\label{sec:necessary}

We recall from \cite{CHJ} a basic property of Conway-Coxeter friezes, namely that every triangle in a Conway-Coxeter frieze satisfies Condition (\ref{cond:gcd}) of Theorem \ref{thm:mgon}.

\begin{Lemma}(\cite[Lemma 4.3]{CHJ}) \label{lem:gcd}
Let $\mathcal{C}$ be a Conway-Coxeter frieze and $i\le j\le k$. 
Then the greatest common divisor of any two of the numbers $c_{i,j}$,
$c_{j,k}$ and $c_{i,k}$ divides the third number. In particular,
$\gcd(c_{i,j},c_{j,k})=\gcd(c_{i,k},c_{j,k})=\gcd(c_{i,j},c_{i,k})$.
\end{Lemma}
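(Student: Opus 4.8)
\textbf{Proof plan for Lemma \ref{lem:gcd}.}

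The plan is to exploit the combinatorial description of Conway-Coxeter friezes via triangulations of polygons from \cite{CC73}, together with the Ptolemy relations from Proposition \ref{thm:ptolemy}. Fix a Conway-Coxeter frieze $\mathcal{C}$ on an $m$-gon with associated triangulation $T$, so that $c_{i,j}$ counts, for a diagonal or edge $(i,j)$, the number of triangles of $T$ which the segment $(i,j)$ meets (equivalently, one more than the number of diagonals of $T$ it crosses), and all edges satisfy $c_{i,i+1}=1$. Since any two of $c_{i,j},c_{j,k},c_{i,k}$ dividing the third immediately forces the three pairwise gcd's to coincide, it suffices to prove the divisibility statement. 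The key relation is the Ptolemy identity applied to the quadrilateral with vertices $i,j,k$ and a fourth vertex; more precisely, I would use the local frieze rule together with the standard fact (which follows from the frieze recursion, or from matching the two sides of a triangle) that for consecutive vertices one has an equation of the form $c_{i,k} = c_{i,j}\,c_{j,k} - c_{i,j-1}\,c_{j+1,k}$ or similar three-term linear recursions in which every coefficient is a positive integer.

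First I would set up the induction on $k-i$, the ``width'' of the triangle $(i,j,k)$. The base case $k-i\le 2$ is trivial since then at least one of the three entries is an edge, equal to $1$, which divides everything. For the inductive step, I would look at how the triangulation $T$ behaves near the vertices $i,j,k$: there is a triangle of $T$ on the arc between $i$ and $k$ (not containing $j$ on that arc, say) with a third vertex $\ell$, and applying the Ptolemy relation \eqref{eq:ptolemy} to the four vertices in appropriate cyclic order yields $c_{i,k}$ as an integer combination of entries $c_{\bullet,\bullet}$ attached to triangles of strictly smaller width, to which the induction hypothesis applies. The divisibility is then propagated: if $d\mid c_{i,j}$ and $d\mid c_{j,k}$, one tracks $d$ through the Ptolemy/recursion expression for $c_{i,k}$; and symmetrically if $d \mid c_{i,k}$ and $d\mid c_{j,k}$ (or $d\mid c_{i,j}$), one rearranges the same relation to isolate the third entry.

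The main obstacle I anticipate is bookkeeping: the Ptolemy relation $c_{i,k}c_{j,\ell}=c_{i,\ell}c_{j,k}+c_{i,j}c_{k,\ell}$ mixes the index $j$ with an auxiliary index $\ell$, so to isolate, say, $c_{i,k}$ purely in terms of $c_{i,j}$, $c_{j,k}$ and smaller entries I need to choose $\ell$ carefully (for instance as the apex of the triangle of $T$ incident to the edge or diagonal $(i,k)$ on the side away from $j$, so that $c_{i,\ell}$ and $c_{k,\ell}$ have smaller width), and I must also handle the degenerate configurations where $j=\ell$ or where one of the auxiliary entries is itself an edge. A clean way to sidestep part of this is to invoke instead the unimodular-row description: attach to each vertex a vector in $\mathbb{Z}^2$ so that $c_{i,j}=\det(v_i\,|\,v_j)$ with consecutive determinants equal to $1$; then $c_{i,j}v_k - c_{i,k}v_j + c_{j,k}v_i = 0$ (a Cramer-type relation), and reading this vector identity in a basis shows at once that any common divisor of two of the $c$'s divides the third. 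I would most likely present the argument in this second form, as it makes the divisibility transparent and avoids the case analysis, leaving only the (standard, already-in-\cite{CHJ}) verification that such an integral vector labelling exists for a Conway-Coxeter frieze.
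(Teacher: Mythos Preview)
The paper does not prove this lemma at all: it is quoted verbatim from \cite[Lemma~4.3]{CHJ} and used as a black box, so there is no ``paper's own proof'' to compare against beyond the citation.

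That said, your second approach via integer vectors is correct and is the cleanest route. Once you have $v_1,\dots,v_m\in\mathbb{Z}^2$ with $c_{i,j}=\det(v_i\,|\,v_j)$ and $\det(v_{j-1}\,|\,v_j)=1$, each $v_j$ is a primitive lattice vector, and the Cramer identity $c_{j,k}\,v_i - c_{i,k}\,v_j + c_{i,j}\,v_k=0$ read in both coordinates shows that a common divisor of any two of the $c$'s divides $c_{i,k}\cdot(v_j)_1$ and $c_{i,k}\cdot(v_j)_2$ (and symmetrically), hence divides the third $c$ by primitivity. This is a complete argument.

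Your first approach, by contrast, is over-engineered. There is no need to invoke the triangulation $T$, to pick $\ell$ as an apex of a triangle of $T$, or to induct on $k-i$. A single application of the Ptolemy relation with $\ell$ taken to be a vertex \emph{adjacent} to one of $i,j,k$ already does the job: for instance, with $i<j<j+1\le k$ one has
\[
c_{i,j+1}\,c_{j,k} \;=\; c_{i,k}\,c_{j,j+1} + c_{i,j}\,c_{j+1,k} \;=\; c_{i,k} + c_{i,j}\,c_{j+1,k},
\]
so any common divisor of $c_{i,j}$ and $c_{j,k}$ divides $c_{i,k}$; the other two divisibilities follow by inserting $k+1$ or $i-1$ in the analogous way. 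This is presumably close to what \cite{CHJ} actually does, and it avoids both the induction and the case analysis you flagged as obstacles.
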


The next step in the proof is to notice that the condition on the $\gcd$'s 
from Lemma \ref{lem:gcd} has implications for the situation where 
$(p+1)$-subpolygons with the same $p$-valuations exist.

\begin{Proposition}\label{prop:pm}
Let $\mathcal{C}$ be a frieze with coefficients on an $n$-gon over the positive integers.
Assume that we have
\begin{equation}\label{cgcd}
\gcd(a,b)=\gcd(b,c)=\gcd(a,c)
\end{equation}
for any triangle $(a,b,c)$ in $\mathcal{C}$
and that $\mathcal{C}$ contains a $(p+1)$-subpolygon $\mathcal{D}$ for a prime number $p$
such that the labels of all edges and diagonals of $\mathcal{D}$ have the 
same $p$-valuation $m$.
Then the label of every edge and diagonal of $\mathcal{C}$ is divisible by $p^m$.
\end{Proposition}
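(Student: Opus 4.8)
The plan is to reduce the global divisibility statement to a local, combinatorial propagation argument along the edges and diagonals of the $n$-gon, using the Ptolemy relations (Proposition \ref{thm:ptolemy}) together with the $\gcd$-condition (\ref{cgcd}). First I would fix the prime $p$ and work throughout with $p$-valuations $\nu_p$, adopting the convention $\nu_p(x)\ge m$ $\iff$ $p^m\mid x$; the claim is then that $\nu_p(c_{i,j})\ge m$ for every edge and diagonal of $\mathcal{C}$. The hypothesis gives a $(p+1)$-subpolygon $\mathcal{D}$, with vertices $v_0,\dots,v_p$ say, all of whose $\binom{p+1}{2}$ labels have valuation exactly $m$.

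The key observation I would isolate first is a consequence of (\ref{cgcd}) for valuations: in any triangle $(a,b,c)$, the equality $\gcd(a,b)=\gcd(b,c)=\gcd(a,c)$ forces $\min(\nu_p(a),\nu_p(b)) = \min(\nu_p(b),\nu_p(c)) = \min(\nu_p(a),\nu_p(c))$, and an elementary check shows this means the two smallest of $\nu_p(a),\nu_p(b),\nu_p(c)$ coincide — i.e. no triangle has a strict unique minimum valuation among its three sides. I would then use this "no strict unique minimum in a triangle" principle as the engine of propagation. Concretely, suppose some edge or diagonal $e=\{x,y\}$ of $\mathcal{C}$ has $\nu_p(c_{x,y})<m$; I want to derive a contradiction with the existence of $\mathcal{D}$.

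The main step is the propagation itself. Starting from an edge/diagonal $e$ with minimal valuation $m' := \nu_p(e) < m$ among all labels of $\mathcal{C}$ (choose $e$ to realize the global minimum), I consider triangles $\{x,y,v\}$ for vertices $v$ of the polygon: by the triangle principle, the side of such a triangle opposite to the unique-minimum candidate cannot be the strict unique minimum, which forces a second side of valuation $m'$ incident to $x$ or to $y$. Iterating this — essentially a connectivity/spanning argument over the graph whose edges are the labels of valuation $m'$ — I would show that the set of vertices "reachable" by a path of valuation-$m'$ labels is closed, and since the polygon's diagonal graph is $2$-connected (every two vertices lie in a common triangle with any third), this reachable set must be all of $\{1,\dots,n\}$. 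Hence every vertex is incident to a label of valuation $m'$; in particular at least two vertices of $\mathcal{D}$ would be joined, through the larger polygon, to such a label, and then applying the Ptolemy relation $c_{i,k}c_{j,\ell} = c_{i,\ell}c_{j,k} + c_{i,j}c_{k,\ell}$ with an appropriate quadrilateral straddling $e$ and two vertices of $\mathcal{D}$, I get a valuation inequality: the valuation of the side lying inside $\mathcal{D}$ (which is exactly $m$) is bounded below by the minimum of the valuations on the right-hand side, one of which involves $m'<m$ — and careful bookkeeping forces one of the $\mathcal{D}$-labels to have valuation $<m$, contradicting the defining property of $\mathcal{D}$.

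I expect the main obstacle to be making the propagation rigorous: tracking which labels are forced to have valuation exactly $m'$ and ensuring the induction actually terminates at a genuine contradiction rather than merely shuffling the minimum around, and handling the Ptolemy step cleanly when the right-hand side is a sum (where $\nu_p$ of a sum is only $\ge$ the min, with equality failing precisely when the two terms have equal valuation — that borderline case will need its own small argument, likely again invoking (\ref{cgcd}) on a sub-triangle). A secondary technical point is organizing the geometry of "quadrilaterals straddling $e$ and $\mathcal{D}$" so that the relevant Ptolemy relation is available for indices in the correct cyclic order; here I would argue by choosing two consecutive vertices of $\mathcal{D}$ and the two endpoints of $e$, possibly after relabeling, and reduce to the case where these four are in convex position in the $n$-gon.
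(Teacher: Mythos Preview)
Your plan has a genuine gap: it never uses the hypothesis that $\mathcal{D}$ has exactly $p+1$ vertices with $p$ prime, and without that hypothesis the statement is simply false. Take $p=3$, $m=1$, and let $\mathcal{C}$ be the square with $c_{1,2}=c_{2,3}=c_{1,3}=3$, $c_{1,4}=c_{3,4}=1$, $c_{2,4}=2$. The Ptolemy relation $3\cdot 2 = 3\cdot 1 + 3\cdot 1$ holds, and one checks directly that all four triangles satisfy (\ref{cgcd}). The triangle $\{1,2,3\}$ has every label of $3$-valuation exactly $1$, yet $c_{3,4}=1$ is not divisible by $3$. Your propagation and quadrilateral argument, which only ever invokes two vertices of $\mathcal{D}$ at a time, would apply verbatim to this example and hence cannot produce a contradiction. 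Concretely, running your Ptolemy step here with $e=\{3,4\}$ and $\mathcal{D}$-vertices $1,2$ gives $c_{1,3}c_{2,4}=c_{1,4}c_{2,3}+c_{1,2}c_{3,4}$, and both summands on the right have $3$-valuation $1=m'+m$, so no label inside $\mathcal{D}$ is forced below $m$. This is exactly the ``borderline case'' you flag at the end, and it is not a technicality that a small sub-argument will clean up: it is where the whole approach breaks.

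The paper's proof uses the size of $\mathcal{D}$ in an essential way. After reducing by induction on $m$ to the case where some diagonal from a vertex $v\notin\mathcal{D}$ to a vertex of $\mathcal{D}$ is \emph{not} divisible by $p$, it runs a residue-counting argument: the Ptolemy relations pin down, for each of the $p-1$ ``interior'' vertices $j$ of $\mathcal{D}$, a nonzero residue class modulo $p$ that the quantity $y_0=c_{v,0}$ must avoid, and a second Ptolemy relation shows these $p-1$ forbidden classes are pairwise distinct. Thus all $p-1$ nonzero residues are excluded, while the gcd condition forces $p\nmid y_0$ --- a contradiction. The primality of $p$ and the exact count $p+1$ of vertices of $\mathcal{D}$ are both indispensable here; any correct argument must exploit them.
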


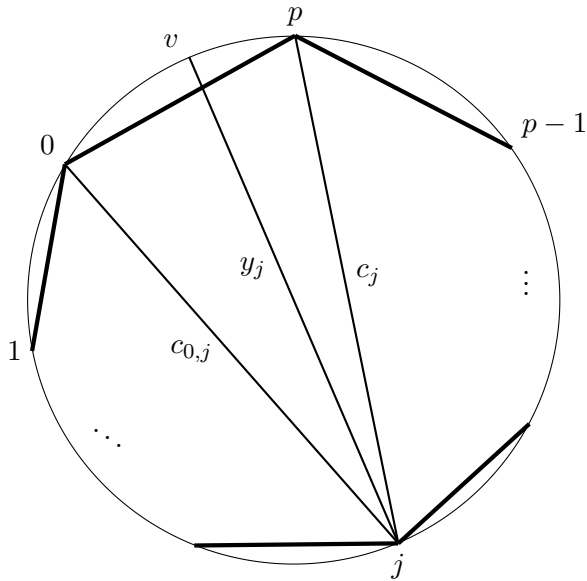
\begin{figure} 
\begin{tikzpicture}[auto]
    \node[name=s, draw, shape=regular polygon, regular polygon sides=600, minimum size=7cm] {};
    \draw[ultra thick] (s.corner 1) to (s.corner 100);
    \draw[ultra thick] (s.corner 100) to (s.corner 170);
    \draw[ultra thick] (s.corner 265) to (s.corner 340);
    \draw[ultra thick] (s.corner 340) to (s.corner 405);
    \draw[ultra thick] (s.corner 1) to (s.corner 510);
    
    \draw[thick] (s.corner 1) to (s.corner 340);
    \draw[thick] (s.corner 40) to (s.corner 340);
    \draw[thick] (s.corner 100) to (s.corner 340);

    \draw[shift=(s.corner 340)]  node[below]  {{$j$}};   
    \draw[shift=(s.corner 1)]  node[above]  {{$p$}};
    \draw[shift=(s.corner 100)]  node[above left]  {{$0$}};   
    \draw[shift=(s.side 200)]  node[right=7pt]  {{$\ddots$}};
    \draw[shift=(s.side 460)]  node[left=7pt]  {{$\vdots$}};
    \draw[shift=(s.corner 40)]  node[above left]  {{$v$}};
    \draw[shift=(s.corner 510)]  node[above right]  {{$p-1$}};
    \draw[shift=(s.corner 170)]  node[left]  {{$1$}};   
    \draw[shift=(s.side 460)]  node[left=63pt]  {{$c_j$}};
    \draw[shift=(s.corner 170)]  node[right=48pt]  {{$c_{0,j}$}};  
    \draw[shift=(s.corner 140)]  node[right=75pt]  {{$y_j$}};   
   \end{tikzpicture}
   \caption{A $(p+1)$-subpolygon in a larger frieze. 
   \label{fig:p+1}}
\end{figure}

\begin{proof}
Let $\mathcal{C}$ and $\mathcal{D}$ be as above; we denote the vertices of $\mathcal{D}$ by $0,\ldots,p$.
We proceed by induction on $m$. If $m=0$, then the claim is trivial, so consider $m>0$.

Assume first that every diagonal $(i,v)$ for $i=0,\ldots,p$ and $v$ not a vertex of $\mathcal{D}$ is divisible by $p$.
Then if $v,w$ are vertices of $\mathcal{C}$ not in $\mathcal{D}$, then the 
label of the diagonal $(v,w)$ is divisible by $p$ as well by assumption (\ref{cgcd}) since $(c_{0,v},c_{v,w},c_{0,w})$ is a triangle.
Dividing the labels of all edges and diagonals of $\mathcal{C}$ by $p$ we obtain a frieze with coefficients $\mathcal{C'}$ satisfying the assumption of the proposition with $m-1$ instead of $m$, thus we are finished by induction.

We may thus now assume without loss of generality that there exists a vertex $v$ such that the diagonal $(v,p)$ is not divisible by $p$, see Figure \ref{fig:p+1}.
For $j=0,1,\ldots,p$ we set $c_j:=c_{p,j}$ and $y_j:=c_{v,j}$ for abbreviation.

For $j=1,\ldots,p-1$ 
the Ptolemy relation for the crossing diagonals $(0,p)$ and $(v,j)$ of 
$\mathcal{C}$ gives
$$c_0y_j = y_0c_j + c_{0,j}.
$$
Dividing this equation by $p^m$ leads to
\begin{equation} \label{eq1}
    c'_0y_j = y_0c'_j + c'_{0,j}
\end{equation}
where $c'_0=\frac{c_0}{p^m}$, $c'_j=\frac{c_j}{p^m}$ and
$c'_{0,j}=\frac{c_{0,j}}{p^m}$. By assumption on $\mathcal{D}$, none of 
these three positive integers is divisible by $p$. 
In addition, note that $y_j$ is not divisible by $p$ by assumption (\ref{cgcd}), since
$(y_p,y_j,c_j)$ are the labels of a triangle in $\mathcal{C}$ and 
$y_p=c_{v,p}$ is not divisible by $p$.
Then Equation (\ref{eq1}) implies
\begin{equation} \label{eq:residue}
y_0\not\equiv -(c'_j)^{-1} c'_{0,j}\,\,(\operatorname{mod}\, p)
\mbox{\hskip1cm 
for all $j=1,\ldots,p-1$}.
\end{equation}
On the other hand, for any $i< j$, dividing the Ptolemy relation for the crossing diagonals
$(0,j)$ and $(p,i)$ by $p^{2m}$ yields 
$$c'_{0,j} c'_i - c'_{0,i}c'_j = c'_0c'_{i,j} \not\equiv 0
\,\,(\operatorname{mod}\, p).
$$
That is, the residue classes modulo $p$
appearing on the right of (\ref{eq:residue})
are pairwise different for $j=1,\ldots,p-1$. Hence the conditions in
(\ref{eq:residue}) rule out all nonzero residue classes modulo $p$
for $y_0$, but
since $y_0$ is not divisible by $p$, this leaves no choice for $y_0$. 
This is a contradiction and thus this case cannot occur.
\end{proof}

We now show that Conditions (\ref{cond:gcd}) and (\ref{cond:p+1}) are necessary for a frieze with
coefficients to appear as a subpolygon of a Conway-Coxeter frieze.
So assume that $\mathcal{C}$ is a frieze with coefficients that appears as a subpolygon of a Conway-Coxeter frieze $\mathcal{E}$.

By Lemma \ref{lem:gcd}, Condition (\ref{cond:gcd}) is satisfied in $\mathcal{E}$, thus satisfied in $\mathcal{C}$ as well.

Now assume that $\mathcal{C}$ contains a $(p+1)$-subpolygon $\mathcal{D}$ for a prime number $p$
such that the labels of all edges and diagonals of $\mathcal{D}$ have the same $p$-valuation $m$.
Proposition \ref{prop:pm} tells us that then the labels of all edges and
diagonals of $\mathcal{E}$ are divisible by $p^m$. Since the edges of the
Conway-Coxeter frieze $\mathcal{E}$ are labelled by $1$, we obtain $m=0$,
that is, the labels of all edges and diagonals of $\mathcal{D}$ are
not divisible by $p$, and condition (\ref{cond:p+1}) holds.

\subsection{Sufficiency}

It remains to prove the sufficiency statement of Theorem \ref{thm:mgon}. Let
$\mathcal{C}$ be a frieze with coefficients over $\mathbb{Z}_{>0}$ on an 
$n$-gon satisfying conditions (\ref{cond:gcd}) and (\ref{cond:p+1}). We have to show
that $\mathcal{C}$ can be extended to a Conway-Coxeter frieze. 

If all boundary edges have label 1 then $\mathcal{C}$ is itself a Conway-Coxeter
frieze and we are done. So assume that $\mathcal{C}$ has a boundary edge with
label $c_0>1$. 
The idea of the proof is to proceed inductively. That is, we aim to construct
a frieze with coefficients $\widetilde{\mathcal{C}}$ over $\mathbb{Z}_{>0}$ 
on an $(n+1)$-gon with the following properties:
\begin{enumerate}
    \item[{(i)}]  $\widetilde{\mathcal{C}}$
    contains $\mathcal{C}$ as a subpolygon. 
    \item[{(ii)}] The edges attached to the new vertex have labels $1$ and $y_0$ where 
    $0< y_0 < c_0$.
    \item[{(iii)}] $\widetilde{\mathcal{C}}$ still satisfies Conditions 
    (\ref{cond:gcd}) and (\ref{cond:p+1}).
\end{enumerate} 
Carrying out this procedure inductively for each boundary edge of $\mathcal{C}$
eventually produces 
a frieze with coefficients with all boundary edges having label 1, that is, a 
Conway-Coxeter frieze containing $\mathcal{C}$ as a subpolygon. We will give an
explicit algorithm to determine such a frieze with coefficients $\widetilde{\mathcal{C}}$,
that is, the proof of this direction is constructive. 
\smallskip

We label the vertices of the $n$-gon by $0,1,\ldots,n-1$ in counterclockwise order,
such that the edge with label $c_0$ has vertices $0$ and $n-1$, see Figure \ref{fig:ext}. 
\begin{figure} 
\begin{tikzpicture}[auto]
    \node[name=s, draw, shape=regular polygon, regular polygon sides=11, minimum size=7cm] {};
    \draw[dashed] (s.corner 1) to (s.corner 3);
    \draw[dashed] (s.corner 1) to (s.corner 5);
    \draw[dashed] (s.corner 1) to (s.corner 7);
    \draw[ultra thick] (s.corner 2) to (s.corner 11);
    \draw[ultra thick] (s.corner 3) to (s.corner 11);
    \draw[ultra thick] (s.corner 5) to (s.corner 11);
    \draw[ultra thick] (s.corner 7) to (s.corner 11);
    
    \draw[shift=(s.side 3)]  node[right=40pt]  {{$y_j$}};
    \draw[shift=(s.side 3)]  node[right=70pt]  {{$c_j$}};
    \draw[shift=(s.side 9)]  node[left=30pt]  {{$c_{i_p}$}};
    \draw[shift=(s.side 11)]  node[above right]  {{$1$}};
    \draw[shift=(s.side 1)]  node[above left]  {{$y_0$}};
    \draw[shift=(s.corner 9)]  node[left=79pt]  {{$y_{i_p}$}};
    \draw[shift=(s.corner 2)]  node[left]  {{$0$}};  
    \draw[shift=(s.corner 3)]  node[left]  {{$1$}};
    \draw[shift=(s.corner 11)]  node[right]  {{$n-1$}};
    \draw[shift=(s.corner 1)]  node[above]  {{$n$}};
    \draw[shift=(s.corner 4)]  node[above right=20pt]  {{$\ddots$}};
    \draw[shift=(s.side 5)]  node[above right=20pt]  {{$\ddots$}};
    \draw[shift=(s.corner 5)]  node[left]  {{$j$}};   
    \draw[shift=(s.corner 7)]  node[below right]  {{$i_p$}};
   \end{tikzpicture}
   \caption{Extending a frieze with coefficients. 
   \label{fig:ext}}
\end{figure}
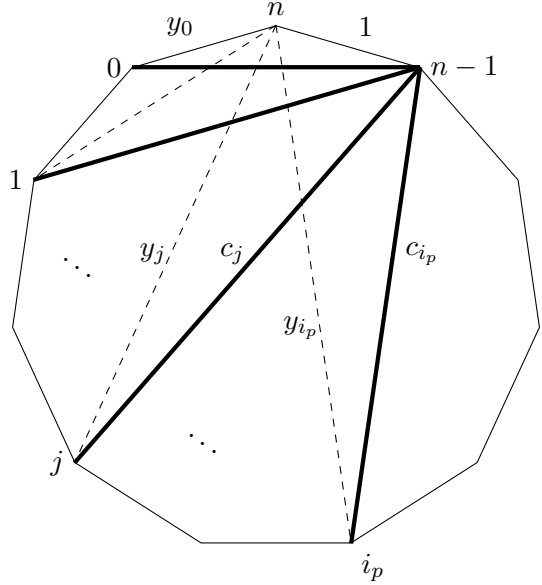
We set $c_j:= c_{j,n-1}$ for $0\le j\le n-2$, see the ultra thick lines in
Figure \ref{fig:ext}. We aim to find suitable labels $y_j:=c_{j,n}$ for the 
new edges and diagonals in the larger frieze with coefficients $\widetilde{\mathcal{C}}$
(the dashed lines in Figure \ref{fig:ext})
such that all Ptolemy relations in $\widetilde{\mathcal{C}}$ are satisfied. 

For computing suitable positive integers $y_j$, 
we consider each prime power divisor of $c_0$ separately and eventually use the 
Chinese Remainder Theorem.

Let $p$ be a prime divisor of $c_0$ and $\ell:=\nu_p(c_0)$ be the $p$-valuation
(that is, $p^{\ell}$ divides $c_0$ but $p^{\ell+1}$ does not divide $c_0$). 
We set
$$m:= \min \{\nu_p(c_i)\mid 0\le i\le n-2\},
$$
and we choose a vertex $i_p$ with $\nu_p(c_{i_p})=m$. Note that for every vertex $j$
in $\mathcal{C}$ we have $p^m\mid c_j$ (by minimality of $m$) and 
also $p^m\mid c_{i_p,j}$ (by Condition (\ref{cond:gcd}) for $\mathcal{C}$). 

For any positive integer $u$ we define $u'$ by $u=p^{\nu_p(u)} u'$.
\medskip

We first want to determine a suitable label $y_{i_p}$.

\begin{Lemma} \label{lem:yip}
With the above notation there are positive integers $y_{i_p}$ such that the following 
conditions are satisfied.
\begin{enumerate}
\item[{(i)}] $y_{i_p} \not \equiv 0\,\,(\operatorname{mod}\,p)$.
\item[{(ii)}] For every vertex $j$ such that $p\nmid \frac{c_{i_p,j}}{p^m}$ and $p\nmid \frac{c_{j}}{p^m}$ we have
\begin{equation} \label{eq:ruleout}
    \left\{ 
\begin{array}{lll} 
c'_j y_{i_p} - c'_{i_p,j}\not \equiv 0\,\,(\operatorname{mod}\,p) & & \mbox{~~if $j<i_p$} \\
c'_j y_{i_p} + c'_{i_p,j}\not \equiv 0\,\,(\operatorname{mod}\,p) & & \mbox{~~if $i_p<j$}
\end{array} \right. .
\end{equation}
\end{enumerate}
\end{Lemma}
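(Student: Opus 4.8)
The plan is to count residue classes modulo $p$ and show there are too few forbidden classes to exhaust all of $\{1,\ldots,p-1\}$. Fix the vertex $i_p$ with $\nu_p(c_{i_p})=m$, and call a vertex $j$ \emph{relevant} if $p\nmid c'_{i_p,j}$ and $p\nmid c'_j$ (equivalently, $\nu_p(c_{i_p,j})=\nu_p(c_j)=m$); only relevant vertices impose a constraint via \eqref{eq:ruleout}. For each such $j$, since $p\nmid c'_j$, the congruence $c'_j y_{i_p}\mp c'_{i_p,j}\equiv 0\,(\operatorname{mod}\,p)$ has exactly one solution $y_{i_p}\equiv \pm (c'_j)^{-1}c'_{i_p,j}\,(\operatorname{mod}\,p)$. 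So each relevant vertex forbids exactly one residue class, and together with the requirement $y_{i_p}\not\equiv 0$ (condition (i)), the total number of forbidden classes is at most $1 + \#\{\text{relevant }j\}$. Hence it suffices to show that the number of relevant vertices $j$ is at most $p-2$, i.e.\ strictly less than $p-1$, because then there remains a nonzero residue class for $y_{i_p}$, and any positive integer in that class works.

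The key step is therefore to bound the number of relevant vertices. Suppose, for contradiction, that there are $p-1$ (or more) relevant vertices; pick $p-1$ of them, say $j_1,\ldots,j_{p-1}$, and adjoin $i_p$ itself, noting $\nu_p(c_{i_p})=m$ and $c_{i_p,i_p}=0$, so these $p$ vertices $\{i_p,j_1,\ldots,j_{p-1}\}$ together with $n-1$ form a candidate for a $(p+1)$-subpolygon $\mathcal{D}$ on which the relevant data all has $p$-valuation exactly $m$. I would verify, using the Ptolemy relation \eqref{eq:ptolemy} for pairs of crossing diagonals among these vertices together with $n-1$ — exactly as in the computation in the proof of Proposition~\ref{prop:pm}, where dividing the Ptolemy relation for $(0,j)$ and $(p,i)$ by $p^{2m}$ gives $c'_{0,j}c'_i - c'_{0,i}c'_j = c'_0 c'_{i,j}$ — that every edge and diagonal of $\mathcal{D}$ then has $p$-valuation exactly $m$. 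But that contradicts Condition~(\ref{cond:p+1}) for $\mathcal{C}$ (recall $p\mid c_0$ forces $p<n$, and such a $(p+1)$-subpolygon with all equal $p$-valuations is forbidden, since it is certainly not the case that all its labels are non-divisible by $p$). This contradiction shows at most $p-2$ relevant vertices exist, and the count above then produces the desired $y_{i_p}$.

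The main obstacle I anticipate is the bookkeeping in the second step: one must be careful that the $p-1$ relevant vertices together with $i_p$ and $n-1$ genuinely give $p+1$ \emph{distinct} vertices spanning an honest subpolygon, and that the valuation $m$ propagates to \emph{all} of its edges and diagonals — not just the ones appearing directly in \eqref{eq:ruleout}. The propagation argument needs the case analysis of the proof of Proposition~\ref{prop:pm}: either every diagonal from a vertex of $\mathcal{D}$ to $n-1$ already has valuation $m$ (in which case the Ptolemy identity $c'_{0,j}c'_i - c'_{0,i}c'_j = c'_0 c'_{i,j}$, together with $\nu_p(c_0)=\ell\ge 1$, forces $\nu_p(c_{i,j})=m$ for the internal diagonals), or one reduces by dividing through by $p$ and inducting. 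I would structure this as a short lemma mirroring Proposition~\ref{prop:pm} rather than re-deriving it inline. Everything else — the residue-class count and the final choice of a representative — is routine.
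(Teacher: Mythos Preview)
Your counting strategy has a genuine gap. You try to bound the total number of relevant vertices by $p-2$, arguing that $p-1$ relevant vertices together with $i_p$ and $n-1$ would form a $(p+1)$-subpolygon with all $p$-valuations equal to $m$. But the propagation step fails: for relevant vertices $j_r<j_s<i_p$, dividing the Ptolemy relation for $(j_r,i_p)$ and $(j_s,n-1)$ by $p^{2m}$ gives
\[
\frac{c_{j_r,i_p}}{p^m}\cdot\frac{c_{j_s}}{p^m}\;-\;\frac{c_{j_r}}{p^m}\cdot\frac{c_{j_s,i_p}}{p^m}\;=\;\frac{c_{j_r,j_s}}{p^m}\cdot\frac{c_{i_p}}{p^m},
\]
and this does \emph{not} force $p\nmid c_{j_r,j_s}/p^m$: the left side is a difference of two units modulo $p$ and may well vanish. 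So your $(p+1)$-subpolygon can have internal diagonals of $p$-valuation strictly larger than $m$, and then Condition~(\ref{cond:p+1}) gives no contradiction. Your remark that $\nu_p(c_0)=\ell\ge1$ should help does not apply either, since vertex $0$ need not belong to the subpolygon you assemble. In short, there can be more than $p-2$ relevant vertices, and your bound is simply false in general.

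The paper proceeds differently. Rather than bounding how many relevant vertices there are, it proves a \emph{Claim}: two relevant vertices $i,j$ forbid the \emph{same} residue class modulo $p$ if $p\mid c_{i,j}/p^m$, and \emph{different} classes otherwise --- this is exactly what the displayed Ptolemy identity above shows, read as an equivalence. Hence all $p-1$ nonzero classes are ruled out only if one can select $p-1$ relevant vertices $j_1,\ldots,j_{p-1}$ with the \emph{additional} property that $p\nmid c_{j_r,j_s}/p^m$ for every pair. With that extra hypothesis in hand, the $(p+1)$-subpolygon on $\{j_1,\ldots,j_{p-1},i_p,n-1\}$ genuinely has every label of $p$-valuation exactly $m$, and now Condition~(\ref{cond:p+1}) applies. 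The idea you are missing is that the Ptolemy identity tells you precisely \emph{when} two relevant vertices collide in the same forbidden class; you must first pass to pairwise non-colliding representatives before the subpolygon argument can work. Counting distinct forbidden residue classes, not relevant vertices, is the correct bookkeeping.
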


\begin{proof}
We consider the nonzero residue classes modulo $p$ and show that for the elements in at least one
residue class the conditions of the lemma hold. Let $j$ be a vertex such that
$p\nmid \frac{c_{i_p,j}}{p^m}$ and $p\nmid \frac{c_{j}}{p^m}$. Then the second condition 
in the lemma rules out the residue class 
$\pm (c'_j)^{-1} c'_{i_p,j}\,\,(\operatorname{mod}\,p)$ to be chosen for $y_{i_p}$.
\smallskip

{\em Claim:} Let vertices $i$ and $j$ both satisfy the assumptions in the second
condition of the lemma. Then 
(\ref{eq:ruleout}) rules out the same residue class modulo $p$
if
$p\mid \frac{c_{i,j}}{p^m}$ and different residue classes modulo $p$ otherwise. 
\smallskip

{\em Proof of the claim:} We can assume $i<j$. There are different cases according
to the location of the vertex $i_p$. We give the details for the case $i<j<i_p$,
the other cases $i<i_p<j$ and $i_p<i<j$ are completely analogous. 

Dividing the Ptolemy relation for the crossing diagonals $(i,i_p)$ and $(j,n-1)$
by $p^{2m}$ yields 
$$c'_j c'_{i,i_p} = c'_i c'_{j,i_p} + c'_{i_p} \frac{c_{i,j}}{p^m}.
$$
This implies 
$$(c'_i)^{-1}c'_{i,i_p} - (c'_j)^{-1}c'_{j,i_p} \equiv 
(c'_i)^{-1}(c'_j)^{-1}c'_{i_p} \frac{c_{i,j}}{p^m}\,\,\,
(\operatorname{mod} p)
$$
which is congruent to 0 if and only if $p$ divides $\frac{c_{i,j}}{p^m}$. Since 
the summands on the left hand side are the values ruled out for $y_{i_p}$
by (\ref{eq:ruleout}), the claim follows.
\medskip

By assumption, the frieze with coefficients $\mathcal{C}$ satisfies Condition 
(\ref{cond:p+1}). This means that there cannot be $p-1$ different vertices 
$j_1,\ldots, j_{p-1}$ satisfying the assumptions in the second condition and such that
$p\nmid \frac{c_{j_r,j_s}}{p^m}$. This implies that by condition (\ref{eq:ruleout})
not all residue classes are ruled out and hence we can choose positive integers $y_{i_p}$
as claimed. 
\end{proof}

Using a suitable value for $y_{i_p}$ as in Lemma \ref{lem:yip} we now want to
look for suitable values for the other new diagonals $y_j$, such that 
the Ptolemy relations in the larger polygon $\widetilde{\mathcal{C}}$ can 
be satisfied.

\begin{Lemma} \label{lem:yj}
We keep the above notation and fix a positive integer $y_{i_p}$ as in Lemma \ref{lem:yip}. 
Then for every integer $y_j$ in the residue class 
$$
\left\{ \begin{array}{lll}
(c'_{i_p})^{-1} \left( \frac{c_j}{p^m} y_{i_p} - \frac{c_{i_p,j}}{p^m}\right)\quad
(\operatorname{mod}\,p^{\ell}) && \text{ if } j<i_p \\
(c'_{i_p})^{-1} \left( \frac{c_j}{p^m} y_{i_p} + \frac{c_{i_p,j}}{p^m}\right) \quad
(\operatorname{mod}\,p^{\ell})
&& \text{ if } i_p<j
\end{array} \right.
$$
the following holds.
\begin{enumerate}
\item[{(a)}] $p\nmid \gcd(y_j,c_j)$.
\item[{(b)}] For every $0\le i<j$ we have
$c_i y_j \equiv c_j y_i + c_{i,j}\,\,(\operatorname{mod}\,p^{\ell})$.
\end{enumerate}
\end{Lemma}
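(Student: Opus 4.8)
The plan is to fix $y_{i_p}$ as in Lemma \ref{lem:yip} and then define, for each vertex $j\ne i_p$, the candidate residue class for $y_j$ via the (rescaled) Ptolemy relation coming from the crossing diagonals $(i_p,j)$ and the new vertex. The key point is that the prescribed residue of $y_j$ modulo $p^\ell$ is exactly what is forced if we want the equation $c_{i_p}y_j = c_j y_{i_p} \pm c_{i_p,j}$ (the Ptolemy relation that will hold in $\widetilde{\mathcal C}$ between the triangle on $i_p$, $j$, $n-1$ and the new vertex $n$) to hold modulo $p^\ell$, after dividing through by $p^m$ and inverting $c'_{i_p}$, which is a unit modulo $p^\ell$ since $\nu_p(c_{i_p})=m$. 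So statement (b) for the pair $(i_p,j)$ is essentially the definition. The work is to upgrade this to all pairs $0\le i<j$ and to verify the divisibility claim (a).

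For part (b), first I would establish it for pairs involving $i_p$: the case where $i_p$ is one of the two indices is immediate from the definition of the residue class (splitting into $j<i_p$ and $i_p<j$, and being careful that when $i_p$ is the \emph{larger} index the roles of $c_i$ and $c_{i_p}$, and the sign, are swapped). For a general pair $i<j$ with both different from $i_p$, the idea is to use that $\mathcal C$ itself satisfies the Ptolemy relation $E_{i,j,i_p,n-1}$ (or the appropriate reordering depending on where $i_p$ sits relative to $i$ and $j$), together with the two instances of (b) already known for the pairs $(i,i_p)$ and $(j,i_p)$, to derive the instance for $(i,j)$. Concretely: multiply the known congruence for one pair by a suitable $c$-entry, do the same for the other, subtract, and recognize the combination as $c_{i_p}$ times the target congruence $c_i y_j - c_j y_i - c_{i,j}$, using the Ptolemy relation in $\mathcal C$ to cancel the $c_{i,j}$-terms correctly; then divide by the unit $c'_{i_p}$ modulo $p^\ell$ (after extracting the common factor $p^m$, noting $p^m\mid c_j$, $p^m\mid c_{i,j}$ etc.). The sign/ordering bookkeeping — which of the three Ptolemy shapes $E_{i,j,i_p,n-1}$, $E_{i,i_p,j,n-1}$, $E_{i_p,i,j,n-1}$ applies in the three cases $i<j<i_p$, $i<i_p<j$, $i_p<i<j$ — is the fiddly part, but it parallels the case analysis already done in the proof of Lemma \ref{lem:yip}.

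For part (a), the claim $p\nmid\gcd(y_j,c_j)$ splits naturally. If $p\nmid c_j$ there is nothing to prove. If $p\mid c_j$, then in the defining residue class the term $\frac{c_j}{p^m}y_{i_p}$ contributes either $0$ modulo $p$ (if $\nu_p(c_j)>m$) or a unit; the decisive term is $\pm(c'_{i_p})^{-1}\frac{c_{i_p,j}}{p^m}$, and by Condition (\ref{cond:gcd}) applied to the triangle $(c_{i_p}, c_{i_p,j}, c_j)$ one has $\nu_p(c_{i_p,j})=\min(\nu_p(c_{i_p}),\nu_p(c_j))=m$ (since $\nu_p(c_{i_p})=m\le\nu_p(c_j)$), so $p\nmid\frac{c_{i_p,j}}{p^m}$. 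Hence the residue of $y_j$ modulo $p$ is a nonzero unit times $c'_{i_p}$-inverse plus possibly a multiple of $p$, so $p\nmid y_j$, which gives (a). I would write this out carefully for the two subcases $\nu_p(c_j)=m$ and $\nu_p(c_j)>m$, and note that Lemma \ref{lem:yip}(i) ($p\nmid y_{i_p}$) is what one needs in the borderline case.

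The main obstacle I anticipate is not any single hard step but the careful sign- and index-ordering analysis in part (b): one must correctly invoke the right Ptolemy relation of $\mathcal C$ in each of the three positional cases and track how the signs in the defining residue classes interact, and this must be done so that the final combination is manifestly $c'_{i_p}$ (a unit mod $p^\ell$) times the desired congruence $c_i y_j\equiv c_j y_i + c_{i,j}$. Once that algebraic identity is set up correctly, dividing by the unit $c'_{i_p}$ modulo $p^\ell$ finishes it, and part (a) is then a short valuation argument using Condition (\ref{cond:gcd}).
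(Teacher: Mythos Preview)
Your overall strategy matches the paper's: part (b) is obtained by plugging the defining residue classes for $y_i$ and $y_j$ into $c_iy_j-c_jy_i$ and simplifying via the Ptolemy relation for $(i,j,i_p,n-1)$ in $\mathcal{C}$, with the three positional cases handled in parallel. That part is fine.

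The gap is in your argument for (a). Your key step, ``by Condition~(\ref{cond:gcd}) applied to the triangle $(c_{i_p},c_{i_p,j},c_j)$ one has $\nu_p(c_{i_p,j})=\min(\nu_p(c_{i_p}),\nu_p(c_j))=m$'', is not correct. Condition~(\ref{cond:gcd}) only gives that the three pairwise minima of $p$-valuations coincide; since $\nu_p(c_{i_p})=m$ is already the global minimum, all you can conclude is that $\min(\nu_p(c_{i_p,j}),\nu_p(c_j))=m$, i.e.\ at least one of $\frac{c_{i_p,j}}{p^m}$, $\frac{c_j}{p^m}$ is a unit modulo $p$. In particular, nothing prevents \emph{both} from being units. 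In that situation your residue for $y_j$ is
\[
(c'_{i_p})^{-1}\Bigl(\tfrac{c_j}{p^m}\,y_{i_p}\mp\tfrac{c_{i_p,j}}{p^m}\Bigr)\pmod{p},
\]
a combination of two units which could perfectly well vanish modulo $p$. Knowing only $p\nmid y_{i_p}$ (Lemma~\ref{lem:yip}(i)) does not rule this out; what does is precisely condition~(ii) of Lemma~\ref{lem:yip}, which was engineered exactly to force $c'_jy_{i_p}\mp c'_{i_p,j}\not\equiv 0\pmod p$ for such $j$. You never invoke Lemma~\ref{lem:yip}(ii), and without it the argument for (a) does not close. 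The paper's proof splits into three cases according to which of $\frac{c_j}{p^m}$, $\frac{c_{i_p,j}}{p^m}$ are divisible by $p$: in the two ``one divisible, one not'' cases the conclusion $p\nmid y_j$ is immediate (using (i) in one of them), while the ``both units'' case is exactly where (ii) is used.
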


\begin{proof}
We have a congruence 
\begin{equation} \label{eq:modpl}
    c'_{i_p} y_j \equiv \frac{c_j}{p^m} y_{i_p} \mp \frac{c_{i_p,j}}{p^m}\,\,
(\operatorname{mod}\,p^{\ell}).
\end{equation}
\medskip

(a) We consider various cases. 

Case 1: Suppose $p\mid \frac{c_{i_p,j}}{p^m}$ .
Then condition (\ref{cond:gcd}) for $\mathcal{C}$ implies that 
$p\nmid \frac{c_{j}}{p^m}$. Moreover, $p\nmid y_{i_p}$ by Lemma \ref{lem:yip}.
Then (\ref{eq:modpl}) gives $p\nmid y_j$. In particular, $p\nmid \gcd(y_j,c_j)$.
\smallskip

Case 2: Suppose $p\mid \frac{c_{j}}{p^m}$. 
Then condition (\ref{cond:gcd}) for $\mathcal{C}$ implies that 
$p\nmid \frac{c_{i_p,j}}{p^m}$. Then 
(\ref{eq:modpl}) yields
that $p\nmid y_j$. In particular, $p\nmid \gcd(y_j,c_j)$. 
\smallskip

Case 3: Suppose $p\nmid \frac{c_{i_p,j}}{p^m}$ and $p\nmid \frac{c_{j}}{p^m}$.
According to the choice of $y_{i_p}$ in 
Lemma \ref{lem:yip}, the right hand side of (\ref{eq:modpl}) is 
invertible modulo $p^{\ell}$. Hence the left hand side is invertible as well.
Thus, $p\nmid y_j$ and in particular $p\nmid \gcd(y_j,c_j)$. 
\medskip

(b) Due to the signs in the definition of $y_j$, 
there are separate cases. We present the argument for the case $i<i_p<j$, the other cases
$i<j<i_p$ and $i_p<i<j$ are very similar. 

By definition of $y_j$ we have
\begin{eqnarray*}
c_i y_j - c_j y_i & \equiv & c_i (c'_{i_p})^{-1} \left(\frac{c_j}{p^m} y_{i_p} + \frac{c_{i_p,j}}{p^m}\right)
- c_j (c'_{i_p})^{-1}\left( \frac{c_i}{p^m} y_{i_p} - \frac{c_{i_p,i}}{p^m}\right)\,\,\,
(\operatorname{mod}\,p^{\ell}) \\
 & \equiv & (c'_{i_p})^{-1} \left( c_i \frac{c_{i_p,j}}{p^m} + c_j \frac{c_{i_p,i}}{p^m}\right)
 \,\,\,(\operatorname{mod}\,p^{\ell}) .
\end{eqnarray*}
The Ptolemy relation in $\mathcal{C}$ for the crossing diagonals $(i,j)$ and $(i_p,n-1)$
reads 
$$c_{i,j} c_{i_p} = c_i c_{i_p,j} + c_{i_p,i}c_j.
$$
Dividing this equation by $p^m$ and plugging it into the above congruence gives
$$
c_i y_j - c_j y_i  \equiv (c'_{i_p})^{-1} c_{i,j} \frac{c_{i_p}}{p^m}
\equiv (c'_{i_p})^{-1} c_{i,j} c'_{i_p}
\equiv c_{i,j}\,\,\,(\operatorname{mod}\,p^{\ell}),
$$
as claimed.
\end{proof}

We have now constructed 
residue classes for $y_0,y_1,\ldots, y_{n-2}$ modulo $p^{\nu_p(c_0)}$
for each prime divisor $p$ of $c_0$, satisfying the conditions in Lemma 
\ref{lem:yip} and Lemma \ref{lem:yj}. 
Then the Chinese Remainder Theorem 
yields residue classes for $y_0,y_1,\ldots, y_{n-2}$ modulo $c_0$,
which according to Lemma \ref{lem:yj}\,(b) in particular satisfy
$$c_0y_j \equiv c_jy_0 + c_{0,j}\,\,(\operatorname{mod}\,c_0)
$$
for all $j=1,\ldots,n-2$. For $y_0$ we choose 
the smallest positive representative in this residue class, that is, we
have $0< y_0 < c_0$. (In fact, by Lemma \ref{lem:yj}\,(a) we have that
$y_0$ and $c_0$ are coprime, in particular, $y_0$ is nonzero.) 
Recall that this is needed to make the inductive strategy work. 
In particular, for this choice  we have that for each vertex $j=1,\ldots, n-2$ the 
number 
\begin{equation} \label{eq:yj}
    y_j = \frac{c_jy_0 + c_{0,j}}{c_0}
    \end{equation}
is a positive integer. 

Finally, to make the inductive strategy work, we have to show that 
$\widetilde{\mathcal{C}}$ is indeed a frieze with coefficients and that 
$\widetilde{\mathcal{C}}$ satisfies conditions 
(\ref{cond:gcd}) and (\ref{cond:p+1}) of Theorem \ref{thm:mgon}.

\begin{Proposition}
With the above notations and definitions, the following holds.
\begin{enumerate}
    \item[{(a)}] All Ptolemy relations in 
$\widetilde{\mathcal{C}}$ are satisfied, that is, $\widetilde{\mathcal{C}}$
is a frieze with coefficients over $\mathbb{Z}_{>0}$.
\item[{(b)}] $\widetilde{\mathcal{C}}$ satisfies conditions 
(\ref{cond:gcd}) and (\ref{cond:p+1}) of Theorem \ref{thm:mgon}. 
\end{enumerate}
\end{Proposition}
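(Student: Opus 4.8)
The strategy is to verify the two parts separately, reducing everything to congruences modulo the prime powers $p^{\nu_p(c_0)}$ dividing $c_0$ and invoking the lemmas already established. For part (a), since $\widetilde{\mathcal{C}}$ is obtained from $\mathcal{C}$ by adjoining a single vertex $n$ with edge labels $1$ (on $(n-1,n)$) and $y_0$ (on $(0,n)$), the only new Ptolemy relations are those involving a diagonal or edge incident to $n$. By Proposition \ref{thm:ptolemy} it suffices to check the relations $(E_{i,j,k,\ell})$ in which $\ell=n$; the key ones are $c_i y_j = c_j y_i + c_{i,j}$ for $0\le i<j\le n-2$ (the crossing diagonals $(i,n)$ and $(j,n-1)$), together with the degenerate boundary cases that pin down $y_{n-2}$ and the edge $y_0$. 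First I would observe that by construction (Equation \eqref{eq:yj}) the relation $c_0 y_j = c_j y_0 + c_{0,j}$ holds exactly as integers, not merely modulo $c_0$; this is the ``base'' relation. Then for general $0\le i<j\le n-2$ I would derive $c_i y_j - c_j y_i = c_{i,j}$ by combining the base relations for $i$ and $j$ with the Ptolemy relation in $\mathcal{C}$ for the crossing diagonals $(i,j)$ and $(0,n-1)$, namely $c_{i,j}c_0 = c_i c_{0,j} + c_{0,i} c_j$ --- a purely algebraic manipulation over $\mathbb{Z}$ (one multiplies, substitutes $c_0 y_i = c_i y_0 + c_{0,i}$ and $c_0 y_j = c_j y_0 + c_{0,j}$, and the $y_0$-terms cancel). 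This shows all new Ptolemy relations hold over $\mathbb{Z}$; positivity of the $y_j$ was already arranged when choosing $y_0$ minimal, so $\widetilde{\mathcal{C}}$ is a genuine frieze with coefficients over $\mathbb{Z}_{>0}$.

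\textbf{Checking the conditions in part (b).} For Condition \eqref{cond:gcd} in $\widetilde{\mathcal{C}}$, I would note that all triangles of $\widetilde{\mathcal{C}}$ either already lie in $\mathcal{C}$ (done by hypothesis) or are of the form $(y_i, y_j, c_{i,j})$ or involve the new edges. The decisive point is that for each prime $p\mid c_0$ one has $p\nmid \gcd(y_j,c_j)$ by Lemma \ref{lem:yj}(a), and for primes $p\nmid c_0$ the labels $y_j$ are visibly coprime considerations are unaffected; combined with the Ptolemy relation $c_i y_j = c_j y_i + c_{i,j}$ this forces the gcd of any two of $\{y_i, c_i, c_{i,j}\}$ (and cyclic variants) to divide the third, which is exactly Condition \eqref{cond:gcd} for the relevant triangles via Lemma \ref{lem:gcd}-type reasoning applied in reverse. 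More carefully: given a triangle with vertices among $\{i, j, n\}$, its three labels satisfy a single Ptolemy/frieze relation, and the arithmetic of that relation together with the non-divisibility facts just listed yields the equality of pairwise gcds by the standard argument (if a prime power divides two of them it divides the third, and symmetrically). I would spell this out by checking, for each prime $q$ and each pair of the three labels, that $\nu_q$ of the two equals $\nu_q$ of the third, splitting into $q\mid c_0$ (use Lemmas \ref{lem:yip}, \ref{lem:yj}) and $q\nmid c_0$ (use that $y_j$ was only constrained mod $c_0$, so we may assume it is congruent to whatever is needed, or more honestly: the relation $c_0 y_j = c_j y_0 + c_{0,j}$ shows $\nu_q(y_j)$ is controlled since $\nu_q(c_0)=0$).

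\textbf{Condition \eqref{cond:p+1} for $\widetilde{\mathcal{C}}$, the main obstacle.} The serious work is to show that no $(q+1)$-subpolygon $\mathcal{D}'$ of $\widetilde{\mathcal{C}}$, for a prime $q<n+1$, has all its labels of the same positive $q$-valuation. If $\mathcal{D}'$ does not use the new vertex $n$, then $\mathcal{D}'$ is a subpolygon of $\mathcal{C}$ and we are done by hypothesis. So assume $n\in\mathcal{D}'$. The delicate case is $q=p$ a prime divisor of $c_0$: I expect this is where Lemma \ref{lem:yip}(ii) is used essentially. Here one supposes for contradiction that all labels of $\mathcal{D}'$ have $p$-valuation $m'>0$; one of the edges of $\mathcal{D}'$ incident to $n$ is the edge $(n-1,n)$ with label $1$ --- unless $n-1\notin\mathcal{D}'$. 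If $n-1\in\mathcal{D}'$ then the label $1$ kills the assumption immediately. If $n-1\notin\mathcal{D}'$, then the $p$-valuations of the $y_j$ for the vertices $j$ of $\mathcal{D}'\setminus\{n\}$, together with the $c_{i,j}$ among them, would all be $m'$; but Lemma \ref{lem:yip}(ii) and the residue-class choice in Lemma \ref{lem:yj} were precisely engineered (via Condition \eqref{cond:p+1} for $\mathcal{C}$ applied to the $p$-gon on vertices $\{j : j\in\mathcal{D}'\setminus\{n\}\}\cup\{i_p\}$ or similar) to prevent this --- one shows that having $p$ vertices $j$ with $p\nmid c'_j$, $p\nmid c'_{i_p,j}$ and $p\nmid c'_{j_r,j_s}$ violates Condition \eqref{cond:p+1} in $\mathcal{C}$. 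I would phrase this as: if all $y_j$ had valuation $m'$ on $\mathcal{D}'$, divide through by $p^{m'}$ and derive, via the new Ptolemy relations, that the corresponding $(p+1)$-subpolygon in $\mathcal{C}$ (replacing vertex $n$ by $i_p$, or by vertex $0$) has constant $p$-valuation, contradicting the hypothesis on $\mathcal{C}$. For primes $q\nmid c_0$ the situation is easier because the labels $y_j$ were chosen with enough freedom that we can argue $q$-valuations cannot all coincide, or rather the original Condition \eqref{cond:p+1} transfers; still this case should be checked, and I anticipate it follows the same ``divide by $q^{m'}$ and produce a forbidden subpolygon of $\mathcal{C}$'' template, now using the exact integer relation \eqref{eq:yj} rather than a mod-$p^\ell$ congruence. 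The hardest point to get right is bookkeeping which vertex of the ambient frieze plays the role of the removed vertex $n$ when producing the contradiction inside $\mathcal{C}$, and ensuring the resulting configuration is genuinely a $(q+1)$-subpolygon of $\mathcal{C}$ with the claimed constant valuation.
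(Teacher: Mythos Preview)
Your plan for part (a) is correct and is essentially the paper's argument: the relation $c_0 y_j = c_j y_0 + c_{0,j}$ holds exactly by the definition \eqref{eq:yj}, and the general relation $c_i y_j - c_j y_i = c_{i,j}$ follows from this base case together with a Ptolemy relation in $\mathcal{C}$. The paper in fact checks the slightly more general relation $y_k c_{i,j} + y_i c_{j,k} = y_j c_{i,k}$ by the same substitution, but the content is identical.

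For Condition \eqref{cond:gcd} in part (b) your outline is also close to the paper's, though your wording is vague in places. The clean way (and what the paper does) is to first show $\gcd(y_j,c_j)=1$ for every $j$: Lemma \ref{lem:yj}(a) handles primes dividing $c_0$, and for a prime $q\nmid c_0$ one uses \eqref{eq:yj} to see that $q\mid y_j,\,q\mid c_j$ forces $q\mid c_{0,j}$, whence Condition \eqref{cond:gcd} for the triangle $(c_0,c_{0,j},c_j)$ in $\mathcal{C}$ yields $q\mid c_0$, a contradiction. Once $\gcd(y_j,c_j)=1$ for all $j$, the remaining triangles $(y_i,c_{i,j},y_j)$ are dispatched directly from the single Ptolemy relation $c_i y_j = c_j y_i + c_{i,j}$, with no need to split into primes.

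Where you go astray is Condition \eqref{cond:p+1}. Your plan is to suppose a $(q+1)$-subpolygon $\mathcal{D}'\ni n$ of $\widetilde{\mathcal{C}}$ has constant positive $q$-valuation and then to ``transfer'' it to a $(q+1)$-subpolygon of $\mathcal{C}$ (by replacing $n$ with $i_p$ or with $0$) that also has constant $q$-valuation. This transfer step is not justified and, as you yourself note, the bookkeeping is unclear: when you swap $n$ for some vertex $v$ of $\mathcal{C}$, the labels $y_{j_r}$ become $c_{v,j_r}$, and nothing in your argument forces these to have $q$-valuation $m'$. The case split $q\mid c_0$ versus $q\nmid c_0$ does not help here.

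The paper avoids all of this with a one-line observation: at this point you have already established (a) and the gcd condition for $\widetilde{\mathcal{C}}$. Hence Proposition \ref{prop:pm} applies \emph{to $\widetilde{\mathcal{C}}$ itself}: if some $(q+1)$-subpolygon of $\widetilde{\mathcal{C}}$ had constant $q$-valuation $m'>0$, then \emph{every} label in $\widetilde{\mathcal{C}}$ would be divisible by $q^{m'}$. But the edge $(n-1,n)$ has label $1$, contradiction. No transfer back to $\mathcal{C}$, no case analysis on $q$, no further use of Lemmas \ref{lem:yip} or \ref{lem:yj} is needed. (This is exactly the necessity argument recycled, which is why the paper proves necessity first.)
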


\begin{proof}
(a) The Ptolemy relations not involving any of the new diagonals with
label $y_j$ are Ptolemy relations of $\mathcal{C}$ and hold by assumption 
since $\mathcal{C}$ is a frieze with coefficients. 

For crossings of diagonals labelled $y_j$ with the diagonal with label $c_0$
the Ptolemy relation holds by definition of $y_j$ in (\ref{eq:yj}). 

Let $(i,k)$ be a diagonal in $\mathcal{C}$ crossing the new diagonal with label 
$y_j$. Using the formula in (\ref{eq:yj}) and Ptolemy relations in $\mathcal{C}$
we get
\begin{eqnarray*}
y_kc_{i,j} + y_i c_{j,k} & = & \frac{c_ky_0+c_{0,k}}{c_0} c_{i,j}
+ \frac{c_iy_0+c_{0,i}}{c_0} c_{j,k} \\
& = & \frac{1}{c_0} ( y_0 (c_kc_{i,j} + c_ic_{j,k}) + 
c_{0,k} c_{i,j} + c_{0,i} c_{j,k}) \\
& = & \frac{1}{c_0} ( y_0 c_jc_{i,k} + c_{0,j}c_{i,k}) 
 =  \frac{c_j y_0+c_{0,j}}{c_0} c_{i,k}  =  y_j c_{i,k}. 
\end{eqnarray*}
Note that in particular we also obtain $c_i y_j = c_j y_i + c_{i,j}$ for all $i,j$.

(b) For condition (\ref{cond:gcd}) we have to consider the triangles in
$\widetilde{\mathcal{C}}$ which are not already in $\mathcal{C}$. 
There are different types of triangles to consider. 

The triangle $(1,y_0,c_0)$ satisfies condition (\ref{cond:gcd}) by Lemma 
\ref{lem:yj}\,(a). For a triangle $(1,y_j,c_j)$ with $j\neq 0$ we know again
by Lemma \ref{lem:yj}\,(a) that $p\nmid \gcd(y_j,c_j)$ for all prime divisors
$p$ of $c_0$. Suppose $q$ is a prime number dividing $y_j$ and $c_j$ but
$q\nmid c_0$. Then $q\mid c_{0,j}$ by (\ref{eq:yj}). Thus $q$ is a common divisor
of $c_{0,j}$ and $c_j$. But the triangle $(c_0,c_{0,j},c_j)$ in $\mathcal{C}$
satisfies Condition (\ref{cond:gcd}), so $q\mid c_0$, a contradiction. Thus 
we have shown that $\gcd(y_j,c_j)=1$ and the triangle $(1,y_j,c_j)$ satisfies
Condition (\ref{cond:gcd}).

The other new triangles in $\widetilde{\mathcal{C}}$ are of the form
$(y_i,c_{i,j},y_j)$. We use the Ptolemy relation $c_i y_j = c_j y_i + c_{i,j}$.
Let $d$ be a common divisor of $y_i$ and $c_{i,j}$. Then $d$ divides $c_iy_j$.
But $y_i$ and $c_i$ are coprime as shown in the previous paragraph, so $d$ divides $y_j$, as desired. Similarly, if $d$ is a common divisor of $c_{i,j}$ and $y_j$, then $d$ divides $y_i$. Finally, if $d$ is a common divisor 
of $y_i$ and $y_j$ then $d$ divides $c_{i,j}$. 

So Condition (\ref{cond:gcd}) holds for all triangles in 
$\widetilde{\mathcal{C}}$.
\smallskip

For Condition (\ref{cond:p+1}) we have to consider all possible $(q+1)$-gons
in $\widetilde{\mathcal{C}}$ for all prime numbers $q<n+1$. The subpolygons 
in $\mathcal{C}$ satisfy Condition (\ref{cond:p+1}) by assumption. 
So it suffices to consider $(q+1)$-gons $\mathcal{D}$ 
involving the new vertex $n$ and $q$
vertices of $\mathcal{C}$. Suppose that all edges and diagonals in $\mathcal{D}$ 
have the same positive $q$-valuation. Note that in $\widetilde{\mathcal{C}}$ there is a boundary
edge with label 1 attached to $\mathcal{D}$. But  
we have shown in the proof of necessity in
Subsection \ref{sec:necessary} that such a configuration leads to a contradiction. 
Therefore, $\widetilde{\mathcal{C}}$ satisfies Condition (\ref{cond:p+1}),
as needed for the inductive procedure to work. 

This completes the proof of the sufficiency direction in Theorem \ref{thm:mgon}.
\end{proof}

\section{A worked example}
\label{sec:example}

The proof of our main Theorem \ref{thm:mgon} is constructive.
In this section we go through an explicit example
to illustrate how the methods in the proof of the previous section yield
an algorithm to determine a Conway-Coxeter frieze having a given
frieze with coefficients as a subpolygon. 

Let $\mathcal{C}$ be the frieze with coefficients given in Figure \ref{fig:fwc}.
\begin{figure}
\begin{tikzpicture}[auto]
    \node[name=s, draw, shape=regular polygon, regular polygon sides=4, minimum size=3.7cm] {};
    \draw[thick] (s.corner 1) to (s.corner 3);
    \draw[thick] (s.corner 2) to (s.corner 4);
    
    \draw[shift=(s.side 1)]  node[above]  {{$12$}};
    \draw[shift=(s.side 3)]  node[below]  {{$4$}};
    \draw[shift=(s.side 2)]  node[left]  {{$2$}};
    \draw[shift=(s.side 4)]  node[right]  {{$2$}};
    \draw[shift=(s.side 4)]  node[above left=22pt]  {{$2$}};
    \draw[shift=(s.side 3)]  node[above right=9pt]  {{$26$}};
   \end{tikzpicture}
   \caption{A frieze with coefficients on a square. 
   \label{fig:fwc}}
\end{figure}
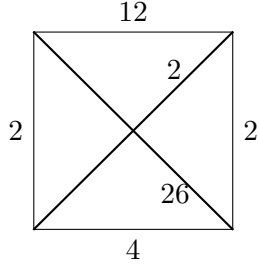
One checks that $\mathcal{C}$ satisfies conditions (\ref{cond:gcd}) and (\ref{cond:p+1})
of Theorem \ref{thm:mgon}, therefore $\mathcal{C}$ can be realized as a subpolygon of
some Conway-Coxeter frieze. We illustrate here how to determine such a Conway-Coxeter
frieze using the methods from the previous section.
\smallskip

Each boundary edge of $\mathcal{C}$ has to be extended. We start with the boundary edge
with label $12$. With the notation as in the previous section we set $c_0=12$, and hence
$c_1=2$ and $c_2=2$. We consider each prime divisor of $c_0$ separately. 

For $p=2$ we have $m=\min\{\nu_2(c_i)\mid 0\le i\le 2\}=1$, and we choose the vertex
$i_2=2$.
We want to determine a suitable value for $y_{i_2}$, using Lemma \ref{lem:yip}.
One checks that no restriction occurs here, so we can choose 
$y_{i_2}\equiv 1\,(\operatorname{mod}\,4)$.

For $p=3$ we have $m=\min\{\nu_3(c_i)\mid 0\le i\le 2\}=0$, and we choose $i_3=2$.
One checks that Lemma \ref{lem:yip} only imposes a restriction for $j=1$, namely 
$y_{i_3}\not\equiv 2\,(\operatorname{mod}\,3)$. So we choose 
$y_{i_3}\equiv 1\,(\operatorname{mod}\,3)$.

The next step now is to compute a suitable value for 
$y_{0}\,(\operatorname{mod}\,c_0)$ by using
Lemma \ref{lem:yj}.

For $p=2$, we have
$$y_0 \equiv (c'_{i_2})^{-1} \left( \frac{c_0}{2} y_{i_2} - \frac{c_{2,0}}{2}\right)
\equiv 1\cdot (6\cdot 1 - 13) \equiv 1 \,(\operatorname{mod}\,4).
$$

Similarly, for $p=3$ one gets
$$y_0 \equiv (c'_{i_3})^{-1} \left( \frac{c_0}{1} y_{i_3} - \frac{c_{2,0}}{1}\right)
\equiv 2\cdot (12\cdot 1 - 26) \equiv 2 \,(\operatorname{mod}\,3).
$$

By the Chinese Remainder Theorem we obtain
$$y_{0}\equiv 5 \,(\operatorname{mod}\,12).
$$

Now we can use Equation (\ref{eq:yj}) to compute the values for $y_1$ and $y_2$, namely
$$y_1=\frac{c_1y_0+c_{0,1}}{c_0} = \frac{2\cdot 5 +2}{12} = 1\hskip0.5cm
\mbox{and}\hskip0.5cm
y_2=\frac{c_2y_0+c_{0,2}}{c_0} = \frac{2\cdot 5 +26}{12} = 3.
$$
Thus we obtain the frieze with coefficients as in Figure \ref{fig:fwcext1}, where we
draw thick lines for diagonals with label 1, that is, for those diagonals which will
appear in the final triangulation.

\medskip

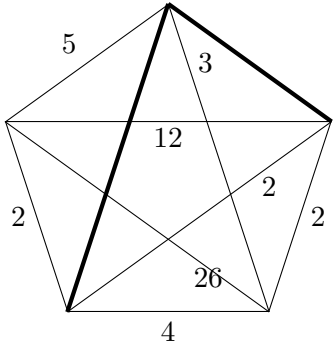
\begin{figure} 
\begin{tikzpicture}[auto]
    \node[name=s, draw, shape=regular polygon, regular polygon sides=5, minimum size=4.5cm] {};
    \draw[ultra thick] (s.corner 1) to (s.corner 3);
    \draw[ultra thick] (s.corner 1) to (s.corner 5);
    \draw[] (s.corner 1) to (s.corner 4);
    \draw[] (s.corner 2) to (s.corner 4);
    \draw[] (s.corner 2) to (s.corner 5);
    \draw[] (s.corner 3) to (s.corner 5);
    
    \draw[shift=(s.side 1)]  node[above left]  {{$5$}};
    \draw[shift=(s.side 3)]  node[below]  {{$4$}};
    \draw[shift=(s.side 2)]  node[left]  {{$2$}};
    \draw[shift=(s.side 4)]  node[right]  {{$2$}};
    \draw[shift=(s.corner 1)]  node[below=43pt]  {{$12$}};
    \draw[shift=(s.side 3)]  node[above right=8pt]  {{$26$}};
    \draw[shift=(s.side 5)]  node[left=10pt]  {{$3$}};
    \draw[shift=(s.corner 4)]  node[above=40pt]  {{$2$}};
   \end{tikzpicture}
   \caption{First step of the extension of $\mathcal{C}$.
   \label{fig:fwcext1}}
\end{figure}

Now we extend further at the boundary edge with label 5. 
We then have $c_0=5$, $c_1=1$, $c_2=3$ and $c_4=1$. For the relevant
prime number $p=5=c_0$, we have 
$m=\min\{\nu_5(c_i)\mid 0\le i\le 3\}=0$ and we choose the vertex $i_5=3$. 

The conditions in Lemma \ref{lem:yip} give restrictions for $j=1$ and $j=2$, 
namely $y_{i_5}\not\equiv 2\,(\operatorname{mod}\,5)$ and
$y_{i_5}\not\equiv 4\,(\operatorname{mod}\,5)$. So we can choose 
$y_{i_5}\equiv 1\,(\operatorname{mod}\,5)$.

With Lemma \ref{lem:yip} we then compute the value for the new edge as
$$y_0 \equiv (c'_{i_5})^{-1} \left( \frac{c_0}{1} y_{i_5} - \frac{c_{3,0}}{1}\right)
\equiv 5-12 \equiv 3 \,(\operatorname{mod}\,5).
$$
From Equation (\ref{eq:yj}) we then determine the values for the other diagonals
$$y_1 = \frac{c_1y_0+c_{0,1}}{c_0} = \frac{1\cdot 3 +2 }{5} = 1,
$$
$$y_2 = \frac{c_2y_0+c_{0,2}}{c_0} = \frac{3\cdot 3 +26 }{5} = 7,
$$
and
$$y_3 = \frac{c_3y_0+c_{0,3}}{c_0} = \frac{1\cdot 3 +12 }{5} = 3.
$$
This leads to the frieze with coefficients given in Figure \ref{fig:fwcext2},
where for clarity we only include those labels which we just computed. Note that the 
original square $\mathcal{C}$ forms the bottom half of the hexagon.

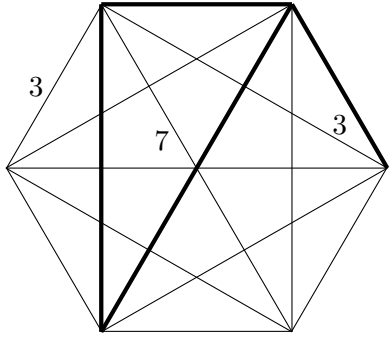
\begin{figure} 
\begin{tikzpicture}[auto]
    \node[name=s, draw, shape=regular polygon, regular polygon sides=6, minimum size=5cm] {};
    \draw[ultra thick] (s.corner 2) to (s.corner 4);
    \draw[ultra thick] (s.corner 1) to (s.corner 4);
    \draw[ultra thick] (s.corner 1) to (s.corner 2);
    \draw[ultra thick] (s.corner 1) to (s.corner 6);
    
    \draw[] (s.corner 1) to (s.corner 4);
    \draw[] (s.corner 2) to (s.corner 4);
    \draw[] (s.corner 2) to (s.corner 5);
    \draw[] (s.corner 3) to (s.corner 5);
    \draw[] (s.corner 1) to (s.corner 5);
     \draw[] (s.corner 3) to (s.corner 6);
    \draw[] (s.corner 4) to (s.corner 6);
     \draw[] (s.corner 2) to (s.corner 6);
    \draw[] (s.corner 1) to (s.corner 3);
    
    \draw[shift=(s.side 2)]  node[left]  {{$3$}};
    \draw[shift=(s.side 3)]  node[above right=48pt]  {{$7$}};
    \draw[shift=(s.side 5)]  node[above=40pt]  {{$3$}};
   \end{tikzpicture}
   \caption{Second step of the extension of $\mathcal{C}$.
   \label{fig:fwcext2}}
\end{figure}

\medskip

The third step in the extension procedure for the edge with label 12 in $\mathcal{C}$ 
is to extend the new boundary edge with label $3$. Hence we set $c_0=3$, $c_1=1$, $c_2=7$,
$c_3=3$ and $c_4=1$. For the only relevant prime number $p=3=c_0$
we have $m=0$ and we choose $i_3=4$. 

Lemma \ref{lem:yip} yields restrictions for $j=1$ and $j=2$, in both cases imposing
that $y_{i_3}\not\equiv 1 \,(\operatorname{mod}\,3)$. So we have to choose 
$y_{i_3}\equiv 2 \,(\operatorname{mod}\,3)$. Then from Lemma \ref{lem:yj} we obtain
$$y_0 \equiv (c'_{i_3})^{-1} \left( \frac{c_0}{1} y_{i_3} - c_{4,0}{1}\right)
\equiv 1\cdot (3\cdot 2-5) \equiv 1\,(\operatorname{mod}\,3).
$$
Equation \ref{eq:yj} gives the following values for the diagonals $y_j$, 
$$y_1=\frac{c_1y_0+c_{0,1}}{c_0} = \frac{1\cdot 1 +2}{3} = 1\hskip0.5cm
\mbox{and}\hskip0.5cm
y_2=\frac{c_2y_0+c_{0,2}}{c_0} = \frac{7\cdot 1 +26}{3} = 11,
$$
$$y_3=\frac{c_3y_0+c_{0,3}}{c_0} = \frac{3\cdot 1 +12}{3} = 5\hskip0.5cm
\mbox{and}\hskip0.5cm
y_4=\frac{c_4y_0+c_{0,4}}{c_0} = \frac{1\cdot 1 +5}{3} = 2.
$$
This leads to the frieze with coefficients given in Figure \ref{fig:fwcext3},
where again for clarity we only show a few of the diagonals and only the labels
we just computed and the remaining boundary labels not equal to 1.

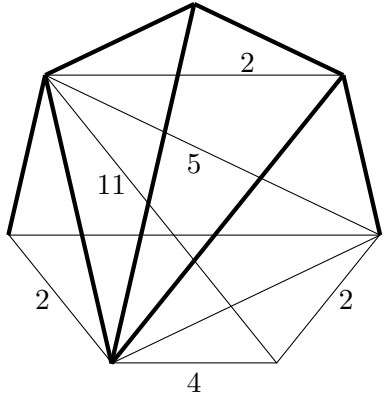
\begin{figure} 
\begin{tikzpicture}[auto]
    \node[name=s, draw, shape=regular polygon, regular polygon sides=7, minimum size=5cm] {};
    \draw[ultra thick] (s.corner 1) to (s.corner 2);
    \draw[ultra thick] (s.corner 2) to (s.corner 3);
    \draw[ultra thick] (s.corner 1) to (s.corner 7);
    \draw[ultra thick] (s.corner 1) to (s.corner 4);
    \draw[ultra thick] (s.corner 7) to (s.corner 4);
    \draw[ultra thick] (s.corner 7) to (s.corner 6);
    \draw[ultra thick] (s.corner 2) to (s.corner 4);
    
    \draw[] (s.corner 1) to (s.corner 4);
    \draw[] (s.corner 2) to (s.corner 4);
    \draw[] (s.corner 2) to (s.corner 5);
     \draw[] (s.corner 3) to (s.corner 6);
    \draw[] (s.corner 4) to (s.corner 6);
     \draw[] (s.corner 2) to (s.corner 6);
    \draw[] (s.corner 2) to (s.corner 7);

    \draw[shift=(s.side 3)]  node[left]  {{$2$}};
    \draw[shift=(s.side 4)]  node[below]  {{$4$}};
    \draw[shift=(s.side 5)]  node[right]  {{$2$}};
    \draw[shift=(s.corner 4)]  node[above=60pt]  {{$11$}};
    \draw[shift=(s.side 4)]  node[above=68pt]  {{$5$}};
    \draw[shift=(s.side 7)]  node[below left=2pt]  {{$2$}};
   \end{tikzpicture}
   \caption{Third step of the extension of $\mathcal{C}$.
   \label{fig:fwcext3}}
\end{figure}

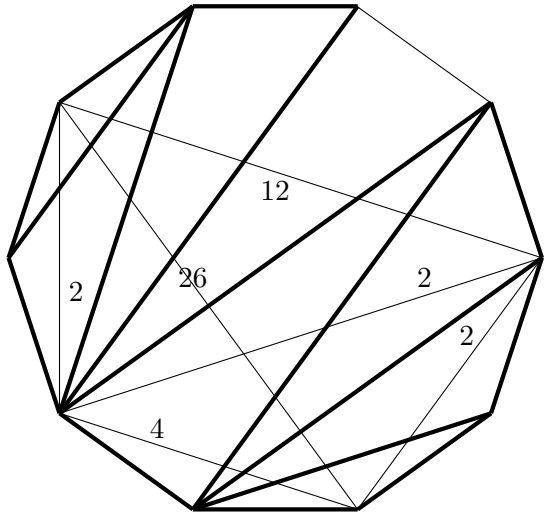
\begin{figure} 
\begin{tikzpicture}[auto]
    \node[name=s, draw, shape=regular polygon, regular polygon sides=10, minimum size=7cm] {};
    
    \draw[ultra thick] (s.corner 2) to (s.corner 4);
    \draw[ultra thick] (s.corner 2) to (s.corner 5);
    \draw[ultra thick] (s.corner 5) to (s.corner 1);
    \draw[ultra thick] (s.corner 5) to (s.corner 10);
    \draw[ultra thick] (s.corner 6) to (s.corner 10);
    \draw[ultra thick] (s.corner 6) to (s.corner 8);
    \draw[ultra thick] (s.corner 6) to (s.corner 9);
    \draw[ultra thick] (s.corner 1) to (s.corner 2);
    \draw[ultra thick] (s.corner 2) to (s.corner 3);
    \draw[ultra thick] (s.corner 3) to (s.corner 4);
    \draw[ultra thick] (s.corner 4) to (s.corner 5);
    \draw[ultra thick] (s.corner 5) to (s.corner 6);
    \draw[ultra thick] (s.corner 6) to (s.corner 7);
    \draw[ultra thick] (s.corner 7) to (s.corner 8);
    \draw[ultra thick] (s.corner 8) to (s.corner 9);
    \draw[ultra thick] (s.corner 9) to (s.corner 10);

    \draw[] (s.corner 5) to (s.corner 9);
    \draw[] (s.corner 3) to (s.corner 9);
    \draw[] (s.corner 3) to (s.corner 5);
    \draw[] (s.corner 5) to (s.corner 7);
    \draw[] (s.corner 3) to (s.corner 7);
    \draw[] (s.corner 7) to (s.corner 9);
    
    \draw[shift=(s.side 8)]  node[left=12pt]  {{$2$}};
    \draw[shift=(s.side 6)]  node[above=113pt]  {{$12$}};
    \draw[shift=(s.side 5)]  node[above right=7pt]  {{$4$}};
    \draw[shift=(s.side 4)]  node[above right=13pt]  {{$2$}};
    \draw[shift=(s.corner 6)]  node[above=80pt]  {{$26$}};
    \draw[shift=(s.side 10)]  node[below=77pt]  {{$2$}};
   \end{tikzpicture}
   \caption{The frieze with coefficients $\mathcal{C}$ as a subpolygon of a Conway-Coxeter frieze.
   \label{fig:cc}}
\end{figure}

Note that we have now completed the extension for the boundary edge with label 12
in the original frieze with coefficients $\mathcal{C}$.
It now remains to apply the same procedure to the other boundary edges with labels
$2$, $4$ and $2$. We leave the computations to the reader. Eventually, one can find
the triangulation of a decagon given in Figure \ref{fig:cc}, containing the original
frieze with coefficients $\mathcal{C}$ as a subpolygon.

\end{document}